\documentclass[11pt,leqno]{amsart}
\usepackage{amsmath,amssymb,amsthm,amsfonts}
\usepackage[shortlabels]{enumitem}
\usepackage[english]{babel}
\usepackage{microtype}
\usepackage[colorlinks=false, pdfborder={0 0 0}]{hyperref}

\usepackage{marginnote} 
\usepackage{bbm}
\usepackage[all]{xy}
\usepackage{tikz}
\usetikzlibrary{arrows}
\usetikzlibrary{shapes,decorations}
\usetikzlibrary{positioning}
\usepackage{tikz-cd} 
\usepackage[normalem]{ulem}
\usepackage{mathrsfs} 


	\definecolor{ceruleanblue}{rgb}{0.16, 0.32, 0.75}
\definecolor{airforceblue}{rgb}{0.36, 0.54, 0.66}
	\definecolor{aqua}{rgb}{0, 0.6, 0.6}
\definecolor{lightred}{rgb}{1, 0.8, 0.8}

\usepackage{bbm}

\DeclareMathOperator{\diam}{diam\,}
\DeclareMathOperator{\co}{co}

\DeclareMathOperator{\cconv}{\overline{\co}}

\newcommand{\ext}[1]{\operatorname{ext}\left(#1\right)}
\newcommand{\strexp}[1]{\operatorname{strexp}\left(#1\right)}
\newcommand{\wstrexp}[1]{w\operatorname{-strexp}\left(#1\right)}

\newcommand{\dent}[1]{\operatorname{dent}\left(#1\right)}

\renewcommand{\geq}{\geqslant}
\renewcommand{\leq}{\leqslant}

\renewcommand{\>}{\rangle}

\newcommand{\norm}[1]{\left\Vert#1\right\Vert}

\newtheorem{theorem}{Theorem}[section]
\newtheorem{lemma}[theorem]{Lemma}
\newtheorem{claim}[theorem]{Claim}
\newtheorem{proposition}[theorem]{Proposition}
\newtheorem{corollary}[theorem]{Corollary}
\newtheorem{question}[theorem]{Question}
\theoremstyle{definition}
\newtheorem{definition}[theorem]{Definition}
\newtheorem{example}[theorem]{Example}

\theoremstyle{remark}
\newtheorem{remark}[theorem]{Remark}
\numberwithin{equation}{section}


\def\fnote#1{\footnote}

\def\ignora#1{}
\def\n3#1{\left\vert  \! \left\vert \! \left\vert \, #1 \, \right\vert \!
  \right\vert \! \right\vert }


\newcommand{\pten}{\ensuremath{\widehat{\otimes}_\pi}}

\title{Extremal structure of projective tensor products}

\author[Garc\'ia-Lirola]{Luis C. Garc\'ia-Lirola}
\address[L. García-Lirola]{Departamento de Matemáticas, Universidad de Zaragoza, 50009, Zaragoza, Spain
\newline
	\href{http://orcid.org/0000-0001-9211-4475}{ORCID: \texttt{0000-0001-9211-4475} } }
 \email{\texttt{luiscarlos@unizar.es}}
\urladdr{\url{https://personal.unizar.es/luiscarlos/}}

\author[Grelier]{ Guillaume Grelier }
\address[G. Grelier]{Universidad de Murcia, Departamento de Matem\'aticas, Campus de Espinardo 30100 Murcia, Spain} \email{g.grelier@um.es}

\author[Mart\'inez-Cervantes]{Gonzalo Mart\'inez-Cervantes}
\address[G. Mart\'inez-Cervantes]{Universidad de Alicante, Departamento de Matem\'{a}ticas, Facultad de Ciencias, 03080 Alicante, Spain
	\newline
	\href{http://orcid.org/0000-0002-5927-5215}{ORCID: \texttt{0000-0002-5927-5215} } }	
\email{gonzalo.martinez@ua.es}

\author{Abraham Rueda Zoca}
\address[A. Rueda Zoca]{Universidad de Granada, Facultad de Ciencias.
Departamento de An\'{a}lisis Matem\'{a}tico, 18071-Granada
(Spain)
	\newline
	\href{https://orcid.org/0000-0003-0718-1353}{ORCID: \texttt{0000-0003-0718-1353} }}
\email{\texttt{abrahamrueda@ugr.es}}
\urladdr{\url{https://arzenglish.wordpress.com}}
\thanks{Research partially supported by Agencia Estatal de Investigación and EDRF/FEDER ``A way of making Europe" (MCIN/AEI/10.13039/501100011033) through grants PID2021-122126NB-C32 and PID2021-122126NB-C31 (Rueda Zoca), by Fundaci\'on S\'eneca Regi\'on de Murcia 20906/PI/18 (Garc\'ia-Lirola and Grelier). The research of García-Lirola was also supported by DGA project E48-20R.  The research of G.~Grelier was also supported by MICINN 2018 FPI fellowship with reference PRE2018-083703. The research of Abraham Rueda Zoca was also supported by Junta de Andaluc\'ia  Grants FQM-0185 and PY20\_00255.}

\keywords{Banach space; Projective tensor product; Preserved extreme point; Strongly exposed point}

\subjclass[2020]{
Primary 46B08, 
46B28  	
}

\begin{document}

\maketitle

\markboth{GARC\'IA-LIROLA, MART\'INEZ-CERVANTES, GRELIER AND RUEDA ZOCA}{EXTREMAL STRUCTURE OF PROJECTIVE TENSOR PRODUCTS}

\begin{abstract}
We prove that, given two Banach spaces $X$ and $Y$ and bounded, closed convex sets $C\subseteq X$ and $D\subseteq Y$, if a nonzero element $z\in \cconv(C\otimes D)\subseteq X\pten Y$ is a preserved extreme point then $z=x_0\otimes y_0$ for some preserved extreme points $x_0\in C$ and $y_0\in D$, whenever $K(X,Y^*)$ separates points of $X \pten Y$ (in particular, whenever $X$ or $Y$ has the compact approximation property). Moreover, we prove that if $x_0\in C$ and $y_0\in D$ are weak-strongly exposed points then $x_0\otimes y_0$ is weak-strongly exposed in $\cconv(C\otimes D)$ whenever $x_0\otimes y_0$ has a neighbourhood system for the weak topology defined by compact operators. Furthermore, we find a Banach space $X$ isomorphic to $\ell_2$ with a weak-strongly exposed point $x_0\in B_X$ such that $x_0\otimes x_0$ is not a weak-strongly exposed point of the unit ball of $X\pten X$.
\end{abstract}

\section{Introduction}

One of the most celebrated and earlier results in Functional Analysis is Krein-Milman theorem. This result establishes that if $K$ is a compact convex subset of a locally convex space then $K=\cconv(\ext{K})$, where $\ext{K}$ denotes the set of extreme points of $K$ (see e.g. \cite[Theorem~3.22]{rud}). An example of application is to the unit ball of a dual Banach space $X^*$, where it 
yields that $B_{X^*}=\cconv^{w^*}(\ext{B_{X^*}})$. This result is of capital importance because, thanks to Hahn-Banach theorem, the structure of the geometry of a Banach space $X$ is determined by the dual unit ball $B_{X^*}$. Thus, Krein-Milman theorem tells us that the set $\ext{B_{X^*}}$ codifies 
all the geometric information of the space. 

The identification of the extreme points (and related notions as exposed, denting, or strongly exposed points) on particular classes of Banach spaces has attracted the attention of many researchers in functional analysis, especially in spaces where the definition of the norm is of high complexity, see e.g. \cite{CollinsRuess, ruessstegal, RuessStegallExp} for duals of spaces of compact operators, \cite{Kaminska} for Orlicz-Lorentz spaces, \cite{HM} for Kothe-Bochner spaces,  or, more recently, 
 \cite{aliaga, ag, APPP, gppr18, gpr18} for Lipschitz-free spaces. 
In this note, we will focus on projective tensor products. 
Denoted by $X \pten Y$, the projective tensor product is the completion of the algebraic tensor product $X \otimes Y$ endowed with the norm
$$
\|z\|_{\pi} := \inf \left\{ \sum_{i=1}^n \|x_i\| \|y_i\|: z = \sum_{i=1}^n x_i \otimes y_i \right\},$$
where the infimum is taken over all such representations of $z$. Recall also that $B_{X\pten Y}=\cconv(B_X\otimes B_Y)$.

In the analysis of 
the extremal structure of the projective tensor product we distinguish two lines.  The first one is the exhaustive analysis of the extreme points in duals of operators spaces done by Collins and Ruess \cite{CollinsRuess} and by Ruess and Stegall \cite{ruessstegal}. They established that, given two Banach spaces $X$ and $Y$, the extreme points of the dual unit ball of the $w^*$-to-$w$ continuous compact operators $K_{w^*}(X^*,Y)$ are the elements of the form $x^*\otimes y^*$, for $x^*\in \ext{B_{X^*}}$ and $y^*\in \ext{B_{Y^*}}$. As a consequence of a classical result of tensor product theory \cite[Theorem~5.33]{ryan}, if $X^*$ or $Y^*$ has the Radon-Nikodym property and $X^*$ or $Y^*$ has the approximation property then
\[ \ext{B_{X^*\pten Y^*}}=\ext{B_{X^*}}\otimes \ext{B_{Y^*}}.\]

Little is known without the duality assumptions. Indeed, up to our knowledge, it is an open question whether every extreme point of $B_{X\pten Y}$ must be of the form $x\otimes y$ for $x\in B_X$ and $y\in B_Y$. The situation clarifies for the stronger notions of denting points and strongly exposed points. Ruess and Stegall proved in \cite{RuessStegallExp} that
\[ \strexp{B_{X\pten Y}} =\strexp{B_X}\otimes \strexp{B_Y}.\]
Furthermore, if $x^*$ strongly exposes $x$ in $B_X$ and $y^*$ strongly exposes $y$ in $B_Y$, then $x^*\otimes y^*$ strongly exposes $x\otimes y$ in $B_{X\pten Y}$. For denting points, D. Werner proved in \cite{werner} an analogous result in a more general framework: 
\[ \dent{\cconv(C\otimes D)} =\dent{C}\otimes \dent{D}\]
whenever $C\subset X$ and $D\subset Y$ are closed bounded and absolutely convex subsets.


Motivated by the above results, in this note we study the notions of preserved extreme point and weak-strongly exposed point in projective tensor products. Recall that a point $x\in C$ is a \emph{preserved extreme point} of $C$ (also called \emph{weak*-extreme point}) if it is an extreme point of $\overline{C}^{w^*}\subset X^{**}$; this is a stronger notion than being extreme  but weaker than being denting  (see e.g. \cite{GMZ}).

With this notation in mind, one of the main results of the present paper is the following one.

\begin{theorem}\label{condineceG}
Let $X$ and $Y$ be Banach spaces such that $K(X,Y^*)$ is separating for $X\pten Y$ (in particular, if the pair $(X,Y^*)$ has the CAP). Let $C\subseteq X$, $D\subseteq Y$ be bounded closed convex subsets. If $z$ is a preserved extreme point of $\cconv(C\otimes D)\subset X\pten Y$ then $z=x\otimes y$ for some $x\in C$ and $y\in D$. Moreover, if $z\neq 0$ then $x$ and $y$ are preserved extreme points of $C$ and $D$ respectively.\end{theorem}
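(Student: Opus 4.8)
The plan is to establish the two assertions in turn: first that a preserved extreme point $z$ of $\cconv(C\otimes D)$ must be an elementary tensor $x\otimes y$ with $x\in C$, $y\in D$ — this is where the hypothesis on $K(X,Y^*)$ is essential — and then, for $z\neq0$, that $x$ and $y$ are themselves preserved extreme (a soft bidual argument using no hypothesis). Throughout, for $T\in K(X,Y^*)$ write $\varphi_T\in(X\pten Y)^*$ for the functional with $\varphi_T(x\otimes y)=\langle Tx,y\rangle$; then $T\mapsto\varphi_T$ is a linear isometric embedding of $K(X,Y^*)$ into $(X\pten Y)^*$, and the assumption says precisely that $\{\varphi_T:T\in K(X,Y^*)\}$ separates the points of $X\pten Y$. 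Put $A:=\cconv(C\otimes D)$. Since $C$ and $D$ are bounded, $C\otimes D$ is bounded in $X\pten Y$ ($\norm{c\otimes d}_\pi=\norm c\,\norm d$), so $\overline{A}^{w^*}$, which equals the $w^*$-closed convex hull of $C\otimes D$ in $(X\pten Y)^{**}$, is $w^*$-compact and convex. As $z\in\ext{\overline{A}^{w^*}}$ by hypothesis, Milman's theorem (the partial converse to Krein–Milman) yields $z\in\overline{C\otimes D}^{w^*}$; fix a net $(c_\alpha\otimes d_\alpha)$, $c_\alpha\in C$, $d_\alpha\in D$, with $c_\alpha\otimes d_\alpha\to z$ in $w^*((X\pten Y)^{**},(X\pten Y)^*)$, and, after passing to a subnet, also $c_\alpha\to x^{**}$ in $w^*(X^{**},X^*)$ and $d_\alpha\to y^{**}$ in $w^*(Y^{**},Y^*)$ with $x^{**}\in\overline{C}^{w^*}$, $y^{**}\in\overline{D}^{w^*}$. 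Here one uses the basic fact about a compact operator $T\in K(X,Y^*)$ that, since $T(B_X)$ is relatively norm-compact, $T^{**}$ maps $X^{**}$ into $Y^*$ and is $w^*$-to-norm continuous on bounded sets, so that $(u,v)\mapsto\langle T^{**}u,v\rangle$ is $w^*\times w^*$-continuous on bounded subsets of $X^{**}\times Y^{**}$ (presumably this is recorded as an earlier lemma; otherwise it is a short verification). Evaluating the convergence $c_\alpha\otimes d_\alpha\to z$ against $\varphi_T$ then gives
$$\varphi_T(z)=\lim_\alpha\langle Tc_\alpha,d_\alpha\rangle=\langle T^{**}x^{**},y^{**}\rangle\qquad\text{for all }T\in K(X,Y^*).$$

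Next I would locate $x^{**}$ and $y^{**}$ inside $X$ and $Y$. If $z\neq0$, then some $\varphi_T(z)\neq0$ because $\{\varphi_T\}$ separates $X\pten Y$, so the displayed identity forces $x^{**}\neq0$ and $y^{**}\neq0$. Taking rank-one operators $T=x^*\otimes y^*$ the identity reads $\langle z,x^*\otimes y^*\rangle=\langle x^{**},x^*\rangle\langle y^{**},y^*\rangle$; hence the nuclear operator $T_z\colon X^*\to Y$ canonically associated with $z\in X\pten Y$ (its range lying in $Y$) satisfies $T_zx^*=\langle x^{**},x^*\rangle\,y^{**}$ for every $x^*$. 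Choosing $x^*$ with $\langle x^{**},x^*\rangle=1$ yields $y^{**}=T_zx^*\in Y$, and symmetrically $x^{**}\in X$. Set $x:=x^{**}\in X$ and $y:=y^{**}\in Y$; since $C$ and $D$ are norm-closed and convex, hence weakly closed, $x\in\overline{C}^{w^*}\cap X=C$ and $y\in D$. Finally, for every $T\in K(X,Y^*)$ we get $\varphi_T(z)=\langle T^{**}x,y\rangle=\langle Tx,y\rangle=\varphi_T(x\otimes y)$, so $z=x\otimes y$ because $\{\varphi_T\}$ separates $X\pten Y$. (If $z=0$, the rank-one case forces $\langle x^{**},x^*\rangle\langle y^{**},y^*\rangle=0$ for all $x^*,y^*$, hence $x^{**}=0$ or $y^{**}=0$; as above this element lies in $C$, resp.\ $D$, so $0\in C$ or $0\in D$ and $z$ is again an elementary tensor of points of $C$ and $D$.) I expect this paragraph to be the main obstacle: the passage from ``$\varphi_T(z)=\varphi_T(x\otimes y)$ for all compact $T$'' to ``$z=x\otimes y$'' is exactly what the separation hypothesis is for, and the preliminary observation that the bidual limits $x^{**},y^{**}$ actually lie in $X$ and $Y$ is not automatic and requires the device with $T_z$.

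For the ``moreover'' part, let $z=x\otimes y\neq0$ and suppose, for contradiction, that $x$ is not a preserved extreme point of $C$, say $x=\tfrac12(u+v)$ with $u\neq v$ in $\overline{C}^{w^*}\subseteq X^{**}$. For $w\in\overline{C}^{w^*}$ define $w\otimes y\in(X\pten Y)^{**}$ by $\langle w\otimes y,B\rangle:=\langle w,B^*y\rangle$ for $B\in L(X,Y^*)=(X\pten Y)^*$ (here $B^*y\in X^*$, and there is no Arens ambiguity because $y\in Y$). If $c_\beta\to w$ in $w^*(X^{**},X^*)$ with $c_\beta\in C$, then $c_\beta\otimes y\to w\otimes y$ in $w^*((X\pten Y)^{**},(X\pten Y)^*)$, so, using $y\in D$, $w\otimes y\in\overline{C\otimes D}^{w^*}\subseteq\overline{A}^{w^*}$. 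A direct computation gives $z=x\otimes y=\tfrac12(u\otimes y+v\otimes y)$, whereas $u\otimes y\neq v\otimes y$: choosing $x^*\in X^*$ with $\langle u-v,x^*\rangle\neq0$ and $y^*\in Y^*$ with $\langle y,y^*\rangle\neq0$ (possible since $y\neq0$), the functional $B=x^*\otimes y^*$ separates $u\otimes y$ from $v\otimes y$. This contradicts $z\in\ext{\overline{A}^{w^*}}$, so $x$ is a preserved extreme point of $C$; by symmetry $y$ is one of $D$.
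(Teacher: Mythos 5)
Your proof is correct, and it reaches the conclusion by a genuinely different route from the paper's, even though the analytic core overlaps. The paper exploits the hypothesis on $z$ via the Lin--Lin--Troyanski characterization (at a preserved extreme point the slices form a weak neighbourhood basis), used twice: first to conclude that $z\in\overline{C\otimes D}^{w}$ inside $X\pten Y$ (every slice of $\cconv(C\otimes D)$ meets $C\otimes D$), after which the identification $\overline{C\otimes D}^{w}=\overline{C}^{w}\otimes\overline{D}^{w}$ is cited as a black box from Theorem~\ref{th:prodwclosedG}; and second, for the ``moreover'' part, by restricting the slices $S(\cconv(C\otimes D),T_\alpha,\beta_\alpha)$ to $C$ through $x'\mapsto T_\alpha(x')(y)$. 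You instead work straight from the definition: Milman's theorem places $z$ in the $w^*$-closure of $C\otimes D$ in the bidual, and you then carry out inline exactly the bidual-net analysis that constitutes the proof of Theorem~\ref{th:prodwclosedG} --- the compact-operator claim $\langle Tc_\alpha,d_\alpha\rangle\to\langle T^{**}x^{**},y^{**}\rangle$ and the identification of the limits as elements of $X$ and $Y$; your device for the latter, reading off $y^{**}=T_z x^*\in Y$ from the operator $T_z\colon X^*\to Y$ induced by $z$, is a clean alternative to the paper's argument that $x^{**}$ is $w^*$-continuous, and your treatment of the degenerate case $z=0$ is complete. Your ``moreover'' step --- exhibiting a nontrivial midpoint decomposition $x\otimes y=\tfrac12(u\otimes y+v\otimes y)$ in $(X\pten Y)^{**}$ and separating $u\otimes y$ from $v\otimes y$ by a rank-one functional --- is entirely different from the paper's and, like the rest of your argument, never invokes the slice characterization. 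What the paper's route buys is modularity (the weak-closure theorem is isolated as a result of independent interest and reused elsewhere); what yours buys is a self-contained proof from the bare definition of preserved extreme point.
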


As a particular case we get:

\begin{corollary}
Let $X$ and $Y$ be Banach spaces such that $K(X,Y^*)$ is separating for $X\pten Y$ (in particular, if the pair $(X,Y^*)$ has the CAP). If $z$ is a preserved extreme point of $B_{X\pten Y}$, then $z=x\otimes y$ where $x$ and $y$ are preserved extreme points of $B_X$ and $B_Y$ respectively.
\end{corollary}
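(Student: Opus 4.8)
The plan is to reduce the statement to Theorem~\ref{condineceG} applied to the sets $C=B_X$ and $D=B_Y$, and then to promote the conclusion ``$x,y$ are preserved extreme points of $B_X, B_Y$'' to ``$x\in\ext{B_X}$, $y\in\ext{B_Y}$ are preserved extreme'' — which is exactly what Theorem~\ref{condineceG} already delivers once we observe $\cconv(B_X\otimes B_Y)=B_{X\pten Y}$. So the main work is bookkeeping: recall that $B_{X\pten Y}=\overline{\co}(B_X\otimes B_Y)$ (stated in the introduction), so $\cconv(C\otimes D)=B_{X\pten Y}$ with $C=B_X$ and $D=B_Y$; these are bounded closed convex subsets of $X$ and $Y$, so the hypotheses of Theorem~\ref{condineceG} are met verbatim under the assumption that $K(X,Y^*)$ separates points of $X\pten Y$ (in particular if $(X,Y^*)$ has the CAP).

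First I would invoke Theorem~\ref{condineceG} directly: if $z$ is a preserved extreme point of $B_{X\pten Y}=\cconv(B_X\otimes B_Y)$, then $z=x\otimes y$ for some $x\in B_X$ and $y\in B_Y$. Next I would rule out $z=0$: the zero element is never an extreme point of a ball with nonempty interior in the relevant topology — indeed $0=\tfrac12 w+\tfrac12(-w)$ for any norm-one $w$, and $0$ remains non-extreme in $\overline{B_{X\pten Y}}^{w^*}\subseteq (X\pten Y)^{**}$ since the same convex combination witnesses it there. Hence $z\neq 0$, and the ``moreover'' part of Theorem~\ref{condineceG} yields that $x$ and $y$ are preserved extreme points of $B_X$ and $B_Y$ respectively.

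The only remaining point to check is that $x$ and $y$ genuinely lie on the unit sphere, i.e. $\|x\|=\|y\|=1$, which is automatic for a preserved extreme point of a unit ball: a preserved extreme point of $B_X$ is in particular an extreme point of $B_X$, and no point of the open unit ball is extreme. So $x\in\ext{B_X}$ forces $\|x\|=1$, and similarly $\|y\|=1$; combined with $\|z\|_\pi=\|x\otimes y\|_\pi=\|x\|\,\|y\|=1$ this is consistent, and there is nothing further to prove.

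I do not anticipate a genuine obstacle here: the corollary is a direct specialization of Theorem~\ref{condineceG}, and the only subtlety — excluding the degenerate case $z=0$ — is handled by the elementary observation that $0$ fails to be extreme in any ball. The proof therefore reads: apply Theorem~\ref{condineceG} with $C=B_X$, $D=B_Y$, note $\cconv(C\otimes D)=B_{X\pten Y}$, observe $z\neq0$ since $0\notin\ext{B_{X\pten Y}}$, and conclude via the ``moreover'' clause.
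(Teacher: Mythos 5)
Your proposal is correct and follows exactly the route the paper intends: the corollary is presented there as an immediate particular case of Theorem~\ref{condineceG} via $B_{X\pten Y}=\cconv(B_X\otimes B_Y)$, with the degenerate case $z=0$ excluded because $0=\tfrac12 w+\tfrac12(-w)$ is never extreme in the unit ball. Nothing further is needed.
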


Theorem~\ref{condineceG} points out that, in order to look for preserved extreme points in projective tensor products, we only have to pay attention to basic tensors. We do not know whether the converse holds. However, we will prove a kind of converse for $w$-strongly exposed points (we refer the reader to Section \ref{sec:notation} and Definition \ref{defi:compactneighsys} for the unexplained notions).


\begin{theorem}\label{theo:condisufi}
Let $X$ and $Y$ be Banach spaces such that $K(X,Y^*)$ is separating for $X\pten Y$ (in particular, if the pair $(X,Y^*)$ has the CAP). Let $C\subseteq X$ and $D\subseteq Y$ be symmetric, bounded closed convex subsets. Assume that $x\otimes y$ has a compact neighbourhood system for the weak topology in $\cconv(C\otimes D)\subset X\pten Y$. Then the following are equivalent:
\begin{itemize}
    \item[(i)] $x\otimes y$ is $w$-strongly exposed in $\cconv(C\otimes D)$.
    \item[(ii)]$x$ and $y$ are $w$-strongly exposed in $C$  and $D$, respectively. 
\end{itemize} 
In particular, if $C\otimes D$ is weakly compact, then \[\wstrexp{\cconv(C\otimes D)}=\wstrexp{C}\otimes \wstrexp{D}.\]
\end{theorem}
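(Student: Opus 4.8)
The plan is to prove the equivalence (i)$\Leftrightarrow$(ii) and then deduce the displayed formula as a corollary.

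\medskip

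\textbf{Direction (ii)$\Rightarrow$(i).} Suppose $x^*\in X^*$ $w$-strongly exposes $x$ in $C$ and $y^*\in Y^*$ $w$-strongly exposes $y$ in $D$. The natural candidate functional on $X\pten Y$ that should $w$-strongly expose $x\otimes y$ is the one induced by the operator $x^*\otimes y^*$ (equivalently, the bilinear form $(u,v)\mapsto x^*(u)y^*(v)$), exactly as in the Ruess--Stegall argument for strongly exposed points. First I would normalise so that $x^*(x)=y^*(y)=1$ and check that $\sup\{(x^*\otimes y^*)(z): z\in\cconv(C\otimes D)\}$ is attained at $x\otimes y$; this follows because on basic tensors $u\otimes v$ with $u\in C, v\in D$ one has $x^*(u)y^*(v)\le 1$ (using symmetry of $C,D$ to control signs), and the supremum over the closed convex hull is governed by the supremum over $C\otimes D$. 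The real work is the slice-convergence: given a sequence $(z_n)$ in $\cconv(C\otimes D)$ with $(x^*\otimes y^*)(z_n)\to 1$, one must show $z_n\to x\otimes y$ weakly. The compact neighbourhood system hypothesis enters precisely here: it suffices to test convergence against functionals of the form $T\in K(X,Y^*)$ (viewed as elements of $(X\pten Y)^*$), so I would fix such a compact operator $T$ and show $\langle z_n, T\rangle\to\langle x\otimes y, T\rangle$. For this I would approximate: a slice $S(\cconv(C\otimes D), x^*\otimes y^*, \alpha)$ should, for small $\alpha$, be contained in a set of the form $\overline{\co}(S(C,x^*,\delta)\otimes S(D,y^*,\delta))$ up to an arbitrarily small norm perturbation — this is the key geometric lemma, and it is presumably established earlier in the paper or proved by a direct computation with representations $z=\sum u_i\otimes v_i$ (the kind of bookkeeping already used for denting points in Werner's argument). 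Since $S(C,x^*,\delta)$ and $S(D,y^*,\delta)$ shrink to $x$ and $y$ weakly, and $T$ is compact (hence maps the bounded sets $C,D$ into norm-compact, in particular ``weak-to-norm nice'' sets), one gets that $T$ is close to constant on these small slices, giving the convergence.

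\medskip

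\textbf{Direction (i)$\Rightarrow$(ii).} Here I would invoke Theorem~\ref{condineceG}: a $w$-strongly exposed point is in particular a preserved extreme point (being denting-like; a $w$-strongly exposed point of a bounded closed convex set is a denting point, hence preserved extreme), so if $x\otimes y$ is $w$-strongly exposed in $\cconv(C\otimes D)$ then it is preserved extreme, and since it is manifestly nonzero (assuming $x,y\ne 0$; the degenerate case is trivial), Theorem~\ref{condineceG} forces the factorisation to be genuine, i.e. $x\in C$ and $y\in D$ are preserved extreme. To upgrade from preserved extreme to $w$-strongly exposed, let $\varphi\in X\pten Y)^*$ $w$-strongly expose $x\otimes y$; again by the compact neighbourhood hypothesis we may assume $\varphi$ is (close to) an element $T\in K(X,Y^*)$. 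The point is to manufacture from $T$ a functional $x^*\in X^*$ that $w$-strongly exposes $x$: a natural choice is $x^* = T(\cdot)(y)/\|y\|$-type slicing, or using that $\langle u\otimes y, T\rangle = (Tu)(y)$ defines a functional $u\mapsto (Tu)(y)$ on $X$. One checks this functional exposes $x$ in $C$ — boundedness of the exposing slices for $x\otimes y$ should restrict to boundedness of the corresponding slices of $C\otimes\{y\}$, and weak convergence to $x\otimes y$ should project down to weak convergence to $x$ in $X$, using that $X$ embeds isometrically into $X\pten Y$ via $u\mapsto u\otimes y/\|y\|$ (for a norm-one functional $y^*\in Y^*$ with $y^*(y)=\|y\|$) in a weakly continuous way. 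Symmetrically one produces the exposing functional for $y$.

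\medskip

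\textbf{The displayed formula.} If $C\otimes D$ is weakly compact, then every point of $\cconv(C\otimes D)$ (which is then weakly compact by Krein--Šmulian) has a compact — indeed weakly compact, hence in particular ``compact-operator-generated'' — neighbourhood system for the weak topology, so the hypothesis of the theorem is automatic at $x\otimes y$ for every $x,y$; combining the two directions gives $\wstrexp{\cconv(C\otimes D)} \supseteq \wstrexp{C}\otimes\wstrexp{D}$ from (ii)$\Rightarrow$(i), and the reverse inclusion — that every $w$-strongly exposed point is a basic tensor with $w$-strongly exposed factors — from (i)$\Rightarrow$(ii) together with the nonzero case of Theorem~\ref{condineceG} (the zero element is not $w$-strongly exposed in a symmetric set unless the set is trivial).

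\medskip

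\textbf{Main obstacle.} I expect the crux to be the slice-approximation lemma in the (ii)$\Rightarrow$(i) direction: showing that a thin slice of $\cconv(C\otimes D)$ cut by $x^*\otimes y^*$ is, up to small norm error, contained in the closed convex hull of a product of thin slices of $C$ and $D$. Tensor-norm infima do not interact straightforwardly with convex combinations, so this requires a careful splitting argument on representations $z=\sum_i u_i\otimes v_i$ — discarding the terms whose scalar pairing $x^*(u_i)y^*(v_i)$ is not close to maximal (their total weight is small because the average is near $1$), and handling the symmetry/sign issues. The compactness of $T$ is then exactly what converts ``weakly small slices in the factors'' into ``$T$ almost constant'', since on norm-bounded sets a compact operator turns the relevant weak information into norm information.
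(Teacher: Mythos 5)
Your proposal follows essentially the same route as the paper: (i)$\Rightarrow$(ii) is exactly Corollary~\ref{condineceweakstrongG} (Theorem~\ref{condineceG} plus the functional $u\mapsto T(u)(y)$, for which $T$ need not be compact), and (ii)$\Rightarrow$(i) uses the exposing functional $x^*\otimes y^*$, the slice-of-convex-hull estimate that you call the ``key geometric lemma'' (it is precisely Lemma~\ref{lemma:slicesconv}, so no new bookkeeping with representations is needed), the inclusion of the thin slice of $C\otimes D$ into a product of thin slices of $C$ and $D$ via symmetry, and compactness of the operators defining the neighbourhood system to convert weak smallness of the factor slices into norm control (the paper does this by a finite $\varepsilon$-net covering of $T_i(S(C,x_0^*,\delta'))$, while your ``$T$ is almost constant on the product of small slices'' amounts to the weak-to-norm continuity of compact operators on bounded sets --- the same idea). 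One small correction: a $w$-strongly exposed point need not be denting; the implication you actually need, that it is a preserved extreme point, follows from the characterization of preserved extreme points by slices recalled in Section~\ref{sec:notation}, and the ``in particular'' clause requires noting that $\sigma(X\pten Y, K(X,Y^*))$ is Hausdorff (because $K(X,Y^*)$ separates points), so that the weak-to-$\sigma(X\pten Y,K(X,Y^*))$ identity on the weakly compact set $\cconv(C\otimes D)$ is a homeomorphism.
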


The assumption that $x\otimes y$ has a compact neighbourhood system in the above result might seem to be artificial but, surprisingly or not, it cannot be removed. 
Indeed, in Example~\ref{exam:l2renorming} we find a Banach space $X$ which is isomorphic to $\ell_2$ satisfying that there exists a $w$-strongly exposed point $x_0\in B_X$ and such that $x_0\otimes x_0$ is not a $w$-strongly exposed point of $B_{X\pten X}$.

\section{Notation and preliminary results}\label{sec:notation}

Throughout the paper we will only deal with real Banach spaces. Let $C$ be a bounded subset of a Banach space $X$. Given $x^*\in X^*$ and $\alpha>0$, we denote
\[ S(C, x^*, \alpha)=\{x\in C: x^*(x)>\sup x^*(C)-\alpha\}\]
the (open) slice of $C$ produced by $x^*$. 

We say that $x\in C$ is \textit{extreme} if the condition $x=\frac{y+z}{2}$ with $y,z\in C$ implies $y=z$. We write $x\in\ext{C}$.

A point $x\in C$ is said to be \textit{exposed} if there exists $x^*\in X^*$ such that $x^*(x)>x^*(y)$ for all $y\in C\setminus\{x\}$. We also say that $x^*$ exposes $x$ in $C$. A point $x\in C$ is said \textit{strongly exposed} (resp. \textit{$w$-strongly exposed}) if there exists $x^*\in X^*$ exposing $x$ and such that for all sequences $(x_n)_n\subset C$ such that $x^*(x_n)\xrightarrow[n]{}x^*(x)$, it follows that $x_n\xrightarrow[n]{}x$ (resp. $x_n\xrightarrow[n]{w}x$). Equivalently, the slices of $C$ produced by $x^*$ are a neighbourhood basis of $x$ for the norm (resp. weak) topology in $C$.\footnote{This notation should not be confused with the one in \cite{RuessStegallExp}, where a point in $B_{X^*}$ is called weak*-strongly exposed if it is strongly exposed by an element of $X$.} In this case, we write $x\in\strexp{C}$ (resp. $x\in\wstrexp{C}$).

A point $x\in C$ is a \emph{preserved extreme point} (or a \emph{$w^*$-extreme point}) if $x$ is an extreme point of $\overline{C}^{w^*}$. It can be proved that $x$ is a preserved extreme point if and only if the open slices containing $x$ form a basis for $x$ in the weak topology induced on $C$ (see \cite{LLT}). This characterization will be used twice in the proof of Theorem \ref{condineceG} without further mention.
Notice that, in particular, every $w$-strongly exposed point is a preserved extreme point.

\vspace{0.5cm}

Let us write here the following lemma, which we will use systematically throughout the text. A proof of this well-known result can be found in \cite[Lemma~7.21]{spearpreprint} (a preprint version of \cite{spear}).

\begin{lemma}\label{lemma:slicesconv}
Let $X$ be a Banach space. Let $A$ be a bounded subset of $X$ and write $C=\cconv(A)$. Let $R:=\sup_{x\in A}\Vert x\Vert$. Then, given $x^*\in X^*$, we have:
\begin{enumerate}
    \item $\sup_{x\in A} x^*(x)=\sup_{x\in C} x^*(x)$.
    \item Given $0<\varepsilon<\frac{1}{2}$ we have that
    $$S(C,x^*,\varepsilon^2)\subseteq \co(S(A,x^*,\varepsilon))+4R\varepsilon B_X.$$
\end{enumerate}
\end{lemma}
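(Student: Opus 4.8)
The plan is to prove both items directly from the definition of $\cconv$. For item (1), the inequality $\sup_{x\in A}x^*(x)\leq\sup_{x\in C}x^*(x)$ is clear since $A\subseteq C$. For the reverse inequality, note that $x^*$ is linear and weak-continuous (hence norm-continuous), so the set $\{x:x^*(x)\leq\sup_{x\in A}x^*(x)\}$ is a closed convex set containing $A$, and therefore contains $C=\cconv(A)$; this gives $\sup_{x\in C}x^*(x)\leq\sup_{x\in A}x^*(x)$.

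For item (2), write $M:=\sup_{x\in A}x^*(x)=\sup_{x\in C}x^*(x)$ (using item (1)). Let $z\in S(C,x^*,\varepsilon^2)$, so $x^*(z)>M-\varepsilon^2$. Since $C$ is the closed convex hull of $A$, first approximate: there is $u\in\co(A)$ with $\norm{z-u}<\varepsilon R$ (any tolerance of order $\varepsilon R$ works; here I would be slightly careful to also keep $x^*(u)$ close to $x^*(z)$, which is automatic up to a term of order $\varepsilon R\norm{x^*}$ — to avoid dragging $\norm{x^*}$ around one normalizes $\norm{x^*}=1$, which costs nothing since slices and the conclusion are homogeneous in $x^*$). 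Write $u=\sum_{i=1}^k\lambda_i a_i$ with $a_i\in A$, $\lambda_i\geq 0$, $\sum\lambda_i=1$, and $x^*(u)>M-\varepsilon^2-\varepsilon R$ (absorbing the approximation error). Now split the index set into $I=\{i:a_i\in S(A,x^*,\varepsilon)\}$, i.e. $x^*(a_i)>M-\varepsilon$, and its complement $J$. The key estimate is a standard averaging/Markov-type argument: since $x^*(a_i)\leq M$ for every $i$ and the weighted average $\sum\lambda_i x^*(a_i)=x^*(u)>M-\varepsilon^2-\varepsilon R$ is within roughly $\varepsilon$ of $M$ (for $\varepsilon$ small the dominant error term to track is the $\varepsilon^2$ one; the $\varepsilon R$ term I can keep separately), the total mass $\mu:=\sum_{i\in J}\lambda_i$ on points that are $\varepsilon$-far from the supremum must be small — concretely $\varepsilon\mu\leq\sum_{i\in J}\lambda_i(M-x^*(a_i))\leq M-x^*(u)<\varepsilon^2+\varepsilon R$, so $\mu<\varepsilon+R$; one checks the constants so that the displacement incurred is controlled by $4R\varepsilon$. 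Then decompose $u=\sum_{i\in I}\lambda_i a_i+\sum_{i\in J}\lambda_i a_i=(1-\mu)v+\mu w$ where $v\in\co(S(A,x^*,\varepsilon))$ and $w\in\co(A)\subseteq RB_X$; since also $v\in\co(A)\subseteq RB_X$, we get $\norm{u-v}=\mu\norm{w-v}\leq 2R\mu$, and combining with $\norm{z-u}<\varepsilon R$ yields $z\in\co(S(A,x^*,\varepsilon))+(2R\mu+\varepsilon R)B_X\subseteq\co(S(A,x^*,\varepsilon))+4R\varepsilon B_X$ after bookkeeping the constants (the hypothesis $\varepsilon<\tfrac12$ is exactly what makes the crude bounds collapse into the clean constant $4R$).

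The main obstacle is purely the constant bookkeeping in item (2): one has to choose the initial norm-approximation tolerance, handle the extra $\varepsilon R$-type error from replacing $z$ by a genuine finite convex combination, and then run the Markov averaging inequality so that every slack term is $\leq R\varepsilon$ and they sum to at most $4R\varepsilon$. None of the steps is deep — it is the classical fact that a slice of $\cconv(A)$ is, up to a small norm perturbation, contained in the convex hull of a slice of $A$ — but getting the explicit constant $4R$ right (rather than some larger universal multiple of $R$) requires being deliberate about which error is of order $\varepsilon$ and which of order $\varepsilon^2$, and using $\varepsilon<\tfrac12$ to discard the higher-order terms. Since the excerpt explicitly cites \cite[Lemma~7.21]{spearpreprint}, I would either reproduce this computation or simply invoke that reference.
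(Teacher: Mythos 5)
The paper does not spell out a proof of this lemma; it simply cites \cite[Lemma~7.21]{spearpreprint}, and your overall strategy (closed half-space argument for (1); approximate by a finite convex combination, split the index set, and run a Markov-type averaging estimate for (2)) is exactly the standard route. Item (1) is correct as you wrote it. However, your execution of item (2) has a genuine gap at the step where you fix the approximation tolerance. Taking $u\in\co(A)$ with only $\norm{z-u}<\varepsilon R$ lets $x^*(u)$ drop to $M-\varepsilon^2-\varepsilon R$, and then your own Markov inequality yields $\mu<\varepsilon+R$, which is vacuous (as $\mu\leq 1$ always, and $R$ need not be small). Worse than a constant problem: with that tolerance it can happen that \emph{every} $a_i$ satisfies $x^*(a_i)\leq M-\varepsilon$, i.e.\ $I=\emptyset$, so the decomposition $u=(1-\mu)v+\mu w$ with $v\in\co(S(A,x^*,\varepsilon))$ does not exist at all. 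The phrase ``one checks the constants so that the displacement incurred is controlled by $4R\varepsilon$'' is precisely where the argument breaks; no bookkeeping rescues a tolerance of order $\varepsilon R$.

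The fix is to use that the slice is \emph{open}: since $x^*(z)>M-\varepsilon^2$ strictly, choose $u\in\co(A)$ with $\norm{z-u}<\min\bigl\{\bigl(x^*(z)-(M-\varepsilon^2)\bigr)/\norm{x^*},\, 2R\varepsilon\bigr\}$, so that $x^*(u)>M-\varepsilon^2$ still holds exactly. Then the averaging inequality gives $\mu\varepsilon\leq M-x^*(u)<\varepsilon^2$, hence $\mu<\varepsilon<1$ (so $I\neq\emptyset$, using only $\varepsilon^2<\varepsilon$), and the total displacement is $\norm{u-v}+\norm{z-u}\leq 2R\mu+2R\varepsilon<4R\varepsilon$, as required. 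Note also that your normalization $\norm{x^*}=1$ is not actually licensed by homogeneity: rescaling $x^*$ by $\lambda$ turns $S(C,x^*,\varepsilon^2)$ into $S(C,x^*/\lambda,\varepsilon^2/\lambda)$, and because the statement couples the parameters $\varepsilon$ and $\varepsilon^2$ (which scale identically but enter with different powers), the unit-norm case does not formally imply the general one. Fortunately the corrected argument above never uses $\norm{x^*}=1$ — the norm of $x^*$ only enters in the (arbitrarily small) choice of the approximation tolerance — so the normalization is simply unnecessary.
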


Given two Banach spaces $X, Y$, we denote $L(X,Y)$, $K(X, Y)$ and $F(X, Y)$ the spaces of linear, compact, and finite-rank operators, respectively. Recall that $(X\pten Y)^*=L(X, Y^*)$ isometrically. We refer the reader to \cite{ryan} for basic properties of tensor products. We denote by $\tau_c$ the topology of compact convergence in $L(X,Y)$, i.e.~the topology of uniform convergence on compact subsets of $X$. It is well known that $X$ has the approximation property if $\overline{F(X,X)}^{\tau_c}=L(X,X),$ whereas it has the compact approximation property if $\overline{K(X,X)}^{\tau_c}=L(X,X).$ The definition of the approximation property was extended to pairs of Banach spaces by E. Blonde in \cite{Bonde} as follows: The pair $(X,Y)$ is said to have the approximation property if $\overline{F(X,Y)}^{\tau_c}=L(X,Y)$. In a similar fashion we say that the pair $(X,Y)$ has the compact approximation property (CAP for short) if  $\overline{K(X,Y)}^{\tau_c}=L(X,Y)$ (see, for instance, \cite{DMCM}).
Notice that for any set $S \subset (X\pten Y)^*=L(X, Y^*)$, we have $\overline{S}^{\tau_c} \subset \overline{S}^{w^*}$. 
Since for a subspace $Z\subseteq X^*$ to separate points of $X$ is equivalent to the equality $\overline{Z}^{w^*}=X^*$, we do have the following lemma:

\begin{lemma}
Let $X$, $Y$ be two Banach spaces. If the pair $(X,Y^*)$ has the CAP, then $K(X,Y^*)$ separates points of $X\pten Y$.
\end{lemma}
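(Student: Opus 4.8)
The plan is to reduce the statement to the chain of facts already assembled just before it. The backbone is the standard duality principle: a linear subspace $Z$ of a dual Banach space $W^*$ separates the points of $W$ if and only if it is $w^*$-dense, i.e. $\overline{Z}^{w^*}=W^*$. Applying this with $W=X\pten Y$, $W^*=L(X,Y^*)$ and $Z=K(X,Y^*)$ (which is indeed a linear subspace of $L(X,Y^*)$), the lemma becomes equivalent to the equality $\overline{K(X,Y^*)}^{w^*}=L(X,Y^*)$.

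Next I would feed in the hypothesis. By definition, the pair $(X,Y^*)$ having the CAP means exactly $\overline{K(X,Y^*)}^{\tau_c}=L(X,Y^*)$, where $\tau_c$ denotes uniform convergence on norm-compact subsets of $X$. Then I would combine the two displays through the comparison $\overline{S}^{\tau_c}\subseteq\overline{S}^{w^*}$, valid for every $S\subseteq L(X,Y^*)$: this gives $L(X,Y^*)=\overline{K(X,Y^*)}^{\tau_c}\subseteq\overline{K(X,Y^*)}^{w^*}\subseteq L(X,Y^*)$, hence $\overline{K(X,Y^*)}^{w^*}=L(X,Y^*)=(X\pten Y)^*$, which by the first paragraph is precisely the assertion that $K(X,Y^*)$ separates the points of $X\pten Y$. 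So the deduction itself is three lines.

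The only point carrying genuine content — and the one I would regard as the main obstacle if it were not already recorded in the excerpt — is the comparison $\overline{S}^{\tau_c}\subseteq\overline{S}^{w^*}$, equivalently the fact that $\tau_c$-convergence of a net of operators forces convergence in the $w^*$-topology of $(X\pten Y)^*$. A basic $w^*$-neighbourhood is controlled by finitely many $z\in X\pten Y$, and each such $z$ has a representation $z=\sum_n x_n\otimes y_n$ with $\sum_n\|x_n\|\,\|y_n\|<\infty$; the subtlety is to rescale the representation, replacing $x_n\otimes y_n$ by $(t_nx_n)\otimes(t_n^{-1}y_n)$ for suitable scalars $t_n>0$, so that $\{t_nx_n\}_n\cup\{0\}$ becomes norm-compact in $X$ while $\sum_n\|t_n^{-1}y_n\|$ remains finite. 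Once that is arranged, $\tau_c$-convergence yields $\sup_n\|(T_\alpha-T)(t_nx_n)\|\to 0$, and the obvious estimate $|\langle T_\alpha-T,z\rangle|\leq\sum_n\|(T_\alpha-T)(t_nx_n)\|\,\|t_n^{-1}y_n\|$ closes the argument. The rescaling rests only on the elementary observation that for a convergent positive series $\sum_n a_n$ one can choose $b_n\to 0$ with $\sum_n a_n/b_n<\infty$ (e.g. $b_n=\sqrt{R_n}$ with $R_n=\sum_{k\geq n}a_k$).

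Since the excerpt already states this comparison as a known fact, in the final write-up I would simply quote it and present the short three-line deduction, without reproving the rescaling lemma. Thus the proof of the statement is essentially bookkeeping: translate ``separating'' into $w^*$-density, expand the definition of CAP into $\tau_c$-density, and squeeze the two between the inclusion $\overline{\,\cdot\,}^{\tau_c}\subseteq\overline{\,\cdot\,}^{w^*}$.
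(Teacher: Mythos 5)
Your proof is correct and follows exactly the route the paper takes: the paper's (implicit) argument is precisely the chain ``CAP $=$ $\tau_c$-density of $K(X,Y^*)$ in $L(X,Y^*)=(X\pten Y)^*$'', the inclusion $\overline{S}^{\tau_c}\subseteq\overline{S}^{w^*}$, and the equivalence of $w^*$-density of a subspace with separation of points. Your extra sketch of the rescaling argument behind $\overline{S}^{\tau_c}\subseteq\overline{S}^{w^*}$ is sound, but the paper simply records that inclusion as known, as you propose to do in the final write-up.
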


We will make use of the previous lemma throughout the text without further mention. We finish this section by recalling that the pair $(X,Y^*)$ has the CAP if and only if the pair $(Y,X^*)$ has the CAP. It is immediate that if $X$ or $Y$ has the compact approximation property or the approximation property then the pair $(X,Y^*)$ has the CAP. As a consequence of \cite[Example~4.2]{Bonde}, for every $1\leq p < 2 < q < \infty$ and every subspaces $X \subset \ell_q$ and $Y\subset \ell_p$, the pair $(X,Y)$ has the CAP. Nevertheless, there are such subspaces $X$ and $Y$ failing the compact approximation property.


In order to prove our results about extremal structure, we need the following topological result which is of independent interest.

\begin{theorem}\label{th:prodwclosedG}
Let $X$ and $Y$ be two Banach spaces such that such that $K(X,Y^*)$ is separating for $X\pten Y$ (in particular, if the pair $(X,Y^*)$ has the CAP). Let $C\subseteq X$ and $D\subseteq Y$ be two bounded subsets. Then the weak-closure of $C\otimes D$ in $X\pten Y$ is equal to $\overline{C}^w \otimes \overline{D}^w$, that is $\overline{C\otimes D}^w=\overline{C}^w \otimes \overline{D}^w$.
\end{theorem}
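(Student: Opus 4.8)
The plan is to prove the two inclusions separately. The inclusion $\overline{C\otimes D}^w \subseteq \overline{C}^w\otimes\overline{D}^w$ is the content-rich one; the reverse inclusion $\overline{C}^w\otimes\overline{D}^w \subseteq \overline{C\otimes D}^w$ should follow from elementary bilinearity considerations plus a diagonal/net argument.

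\medskip

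\textbf{Reverse inclusion.} First I would show $\overline{C}^w\otimes\overline{D}^w\subseteq \overline{C\otimes D}^w$. Fix $x\in\overline{C}^w$ and $y\in\overline{D}^w$. One takes nets $(x_\alpha)\subseteq C$ with $x_\alpha\to x$ weakly and $(y_\beta)\subseteq D$ with $y_\beta\to y$ weakly, and checks that $x_\alpha\otimes y_\beta\to x\otimes y$ weakly in $X\pten Y$ along the product directed set. Since $K(X,Y^*)$ is separating for $X\pten Y$, it suffices to test weak convergence against operators $T\in K(X,Y^*)$, and $\langle x_\alpha\otimes y_\beta, T\rangle = \langle T(x_\alpha), y_\beta\rangle$. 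Compactness of $T$ turns the weak convergence $x_\alpha\to x$ into norm convergence $T(x_\alpha)\to T(x)$ in $Y^*$, which together with boundedness of $(y_\beta)$ and $y_\beta\to y$ weakly gives $\langle T(x_\alpha),y_\beta\rangle\to\langle T(x),y\rangle$; one has to be slightly careful to combine the two nets (e.g. split into $|\langle T(x_\alpha)-T(x),y_\beta\rangle| + |\langle T(x),y_\beta-y\rangle|$, the first term controlled uniformly in $\beta$ by $\|T(x_\alpha)-T(x)\|\sup_{D}\|\cdot\|$). This shows $x\otimes y$ lies in the weak closure of $C\otimes D$.

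\medskip

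\textbf{Direct inclusion.} Now suppose $z\in\overline{C\otimes D}^w$; I want $z = x\otimes y$ with $x\in\overline{C}^w$, $y\in\overline{D}^w$. Take a net $(x_\alpha\otimes y_\alpha)_\alpha$ in $C\otimes D$ converging weakly to $z$. Since $C$ and $D$ are bounded, by passing to a subnet and using Goldstine/Alaoglu we may assume $x_\alpha\to x^{**}$ in $(X^{**},w^*)$ and $y_\alpha\to y^{**}$ in $(Y^{**},w^*)$ for some $x^{**}\in\overline{C}^{w^*}\subseteq X^{**}$, $y^{**}\in\overline{D}^{w^*}\subseteq Y^{**}$. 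For every $T\in K(X,Y^*)\subseteq L(X,Y^*)$, its bitranspose $T^{**}:X^{**}\to Y^{***}$ is still weak$^*$-to-weak$^*$ continuous, and crucially, since $T$ is compact, $T^{**}$ actually maps $X^{**}$ into $Y^*$ (identifying $Y^*\hookrightarrow Y^{***}$) and is weak$^*$-to-weak (even weak$^*$-to-norm on bounded sets) continuous there. Hence $T(x_\alpha)=T^{**}(x_\alpha)\to T^{**}(x^{**})\in Y^*$, in the weak$^*$ topology of $Y^{***}$ but with limit in $Y^*$; combining with $y_\alpha\xrightarrow{w^*} y^{**}$ I get $\langle T(x_\alpha),y_\alpha\rangle = \langle y_\alpha, T^{**}(x^{**})\rangle + \langle T(x_\alpha)-T^{**}(x^{**}), y_\alpha\rangle \to \langle y^{**}, T^{**}(x^{**})\rangle$. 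Therefore $\langle z,T\rangle = \langle y^{**}, T^{**}(x^{**})\rangle$ for all $T\in K(X,Y^*)$.

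\medskip

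\textbf{Identifying the limit and the main obstacle.} The remaining — and in my view principal — difficulty is to conclude from $\langle z,T\rangle = \langle y^{**},T^{**}(x^{**})\rangle$ for all $T\in K(X,Y^*)$ that in fact $x^{**}=:x\in X$, $y^{**}=:y\in Y$, and $z=x\otimes y$. The expression $\langle y^{**},T^{**}(x^{**})\rangle$ is exactly the pairing of $T$ with the element $x^{**}\otimes y^{**}$ of the bidual $(X\pten Y)^{**}$, restricted to the subspace $K(X,Y^*)$. Since $K(X,Y^*)$ separates $X\pten Y$ and $z\in X\pten Y$, we get that $x^{**}\otimes y^{**}$ agrees with $z\in X\pten Y\subseteq(X\pten Y)^{**}$ on $K(X,Y^*)$; but two elements of $(X\pten Y)^{**}$ need not coincide just because they agree on a separating subspace of the predual. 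The way around this is to exploit that $z$ is \emph{genuinely} in $X\pten Y$: write $z=\sum_n x_n\otimes y_n$ with $\sum_n\|x_n\|\|y_n\|<\infty$ and use rank-one operators $T = \varphi\otimes\psi$ (i.e. $T(u)=\varphi(u)\psi$) for $\varphi\in X^*$, $\psi\in Y^*$ — these are compact — to obtain $\sum_n\varphi(x_n)\psi(y_n) = \langle z, \varphi\otimes\psi\rangle = x^{**}(\varphi)\,y^{**}(\psi)$ for all $\varphi,\psi$. Fixing $\psi$ with $y^{**}(\psi)\neq 0$ (possible if $y^{**}\neq 0$; the case $z=0$ or $x^{**}\otimes y^{**}=0$ must be handled separately and forces $z=0$) shows $\varphi\mapsto\varphi(\sum_n \psi(y_n)x_n)$ equals $y^{**}(\psi)\,x^{**}(\varphi)$, so $x^{**} = \tfrac{1}{y^{**}(\psi)}\sum_n\psi(y_n)x_n \in X$; symmetrically $y^{**}\in Y$. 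Calling these $x,y$, the identity $\langle z,\varphi\otimes\psi\rangle = \varphi(x)\psi(y)$ for all $\varphi,\psi$, together with injectivity of $X\otimes Y\hookrightarrow$ its completion and density, yields $z = x\otimes y$. Finally $x\in\overline{C}^{w^*}\cap X = \overline{C}^{w}$ (by Mazur, since $C$ is convex — here one uses that $C,D$ convex, which holds in our setting; if not assumed convex one replaces $C$ by $\cconv(C)$ harmlessly as the problem only concerns $C\otimes D$'s weak closure and $\overline{C}^w=\overline{\mathrm{co}}(C)^w$ need not hold, so I would simply invoke that $C\subseteq X$ bounded gives $x^{**}\in\overline{C}^{w^*}$ and, being in $X$, $x\in\overline{C}^{w}$ via $\overline{C}^{w^*}\cap X = \overline{C}^{w}$ when $C$ is weakly closed, or more safely state the theorem's hypotheses include what is needed), and likewise $y\in\overline{D}^w$, completing the proof.
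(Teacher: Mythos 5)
Your treatment of the inclusion $\overline{C\otimes D}^w\subseteq \overline{C}^w\otimes\overline{D}^w$ is essentially the paper's argument (subnets converging weak$^*$ in the biduals, plus the fact that $T^{**}$ of a compact $T$ lands in $Y^*$ and is $w^*$-to-norm continuous on bounded sets); your device for showing $x^{**}\in X$ via the series representation $z=\sum_n x_n\otimes y_n$ and rank-one operators is a valid, slightly more concrete alternative to the paper's observation that $x^*\mapsto (x^*\otimes y^*)(z)$ is $w^*$-continuous. One small repair there: the final identification $z=x\otimes y$ cannot be deduced from agreement on rank-one operators alone (whether $X^*\otimes Y^*$ separates $X\pten Y$ is precisely the kind of approximation-property issue the hypothesis is designed to circumvent); but you already have $\langle z,T\rangle=\langle x\otimes y,T\rangle$ for \emph{all} $T\in K(X,Y^*)$, so invoke the separating hypothesis for the full class of compact operators and the step closes. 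Also, $\overline{C}^{w^*}\cap X=\overline{C}^w$ needs no convexity or Mazur: $x_\alpha\in X$ converging $w^*$ to a point of $X$ is exactly weak convergence in $X$.

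The genuine gap is in the reverse inclusion. You assert that, because $K(X,Y^*)$ separates points of $X\pten Y$, it suffices to test weak convergence of the product net $x_\alpha\otimes y_\beta$ against compact operators. That is false: a separating subspace of the dual determines \emph{points}, not the \emph{topology}. What your estimate proves is convergence in $\sigma(X\pten Y, K(X,Y^*))$, which is in general strictly coarser than the weak topology $\sigma(X\pten Y, L(X,Y^*))$ — indeed, the failure of these two topologies to agree on $\cconv(C\otimes D)$ is exactly the phenomenon behind Definition~\ref{defi:compactneighsys} and Example~\ref{exam:conjuntista}. And the product net genuinely fails to converge weakly: take $C=D=\{0\}\cup\{e_n\}\subseteq\ell_2$, $x_n=y_n=e_n\to 0$ weakly, and $T$ the identity $\ell_2\to\ell_2^*$; then $\langle T(x_n),y_m\rangle=\delta_{nm}$ does not converge to $0$ along the product directed set. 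The correct (and simpler) route is the paper's iterated-limit argument: for fixed $x$, the map $v\mapsto x\otimes v$ is norm-continuous, hence weak-to-weak continuous, so $\{x\}\otimes\overline{D}^w\subseteq\overline{\{x\}\otimes D}^w$; applying this in each variable successively gives $\overline{C}^w\otimes\overline{D}^w\subseteq\overline{\overline{C\otimes D}^w}^w=\overline{C\otimes D}^w$, with no compactness or separation hypothesis needed for this inclusion.
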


\begin{proof}
First, given $x\in \overline{C}^w$, we have that the operator $i\colon Y\to X\otimes Y$ given by $y\mapsto x\otimes y$ is continuous. Thus, it is also weak-to-weak continuous, so 
\[\{x\}\otimes \overline{D}^w =i(\overline{D}^w)\subseteq \overline{i(D)}^w=\overline{\{x\}\otimes D}^w.\]
This shows that $\overline{C}^w\otimes\overline{D}^w\subseteq \overline{\overline{C}^w\otimes D}^w$. Analogously, we get $\overline{C}^w\otimes D\subseteq \overline{C\otimes D}^w$ and so 
\[ \overline{C}^w\otimes \overline{D}^w \subseteq \overline{\overline{C\otimes D}^w}^w=\overline{C\otimes D}^w. \]

Now, given $z\in \overline{C\otimes D}^w$, take a net $(x_s\otimes y_s)$ in $C\otimes D$ such that $x_s\otimes y_s\rightarrow z$ weakly, and let us prove that $z=x\otimes y$ for certain $x\in \overline{C}^w$ and $y\in \overline{D}^w$. 
We denote by $\overline{C}^{w^*}$ and $\overline{D}^{w^*}$ respectively the closure of $C$ and $D$ in the $w^*$ topology of $X^{**}$ and $Y^{**}$ respectively, which are $w^*$-compact because they are bounded.

Since $(x_s)_s\subset \overline{C}^{w^*}$ and $(y_s)_s\subset \overline{D}^{w^*}$ we can assume, up to taking a suitable subnet, that both $x_s\rightarrow x^{**}$ in the $w^*$-topology of $X^{**}$ and $y_s\rightarrow y^{**}$ in the $w^*$-topology of $Y^{**}$. 

\begin{claim}
For any compact operator $K\colon X \longrightarrow Y^*$, we have that 
$$ K(x_s)(y_s) \rightarrow K^{**}(x^{**})(y^{**}).$$
\end{claim}
\begin{proof}[Proof of the claim]
First, recall that $K^{**}\colon X^{**} \longrightarrow Y^{***}$ is a compact operator which satisfies $K^{**}(X^{**})\subseteq Y^{*}$. Fix $\varepsilon>0$. We claim that there exists $s_0$ such that $|K(x_s)(y_s)-K^{**}(x^{**})(y^{**})|<\varepsilon$ for every $s \geq s_0$.
Namely, we know that, since $K^{**}$ is compact, $K^{**}(x^{**})\in Y^*$ and $y_s\rightarrow y^{**}$ in the $w^*$-topology, there exists $s_0$ such that 
$$ \|K(x_s)-K^{**}(x^{**})\|<\varepsilon/(2R) \quad \mbox{ and }\quad |K^{**}(x^{**})(y_s)-K^{**}(x^{**})(y^{**})|<\varepsilon/2$$
for every $s \geq s_0$, where $R>0$ is such that $D \subset RB_{Y}$.
Then
\[\begin{split} |K(x_s)(y_s)-K^{**}(x^{**})(y^{**})|& \leq \|K(x_s)-K^{**}(x^{**})\|\|y_s\|\\ & +|K^{**}(x^{**})(y_s)-K^{**}(x^{**})(y^{**})|<\varepsilon \end{split}\]
for every $s \geq s_0$ as desired.
\end{proof}
 

Now, we claim we can assume $x^{**}\neq 0$ and $y^{**}\neq 0$. Indeed, if $x^{**}=0$ this would imply $0\in \overline{C}^{w}$. Moreover, since $z(K)=(0\otimes y^{**})(K)=0$ holds for every $K\in K(X,Y^*)$, which is separating for $X\pten Y$, we would get that $z=0$ so, taking any $y\in D$, we have $z=0\otimes y\in \overline{C}^w\otimes \overline{D}^{w}$ and the proof would be finished. Henceforth, we assume  $x^{**}\neq 0$ and $y^{**}\neq 0$ and, clearly, the above mentioned equality $z(K)=(x^{**}\otimes y^{**})(K)$ holding true for every $K\in K(X,Y^*)$ implies $z\neq 0$ too.

\begin{claim}\label{claim:xddinX}
$x^{**}\in X$ and $y^{**}\in Y$.
\end{claim}
\begin{proof}[Proof of the claim]  Let us prove that $x^{**}$ is $w^*$-continuous, being the proof for $y^{**}$ completely analogous. Take $y^*\in S_{Y^*}$ such that $y^{**}(y^*)\neq 0$. Now we have that
\[ x^{**}(x^*)=\frac{(x^{**}\otimes y^{**})(x^*\otimes y^*)}{y^{**}(y^*)}=\frac{(x^*\otimes y^*)(z)}{y^{**}(y^*)} \quad \forall x^*\in X^*.\]
Thus, to see that $x^{**}$ is weak*-continuous it suffices to show that $(x_s^*\otimes y^*)(z)\to (x^*\otimes y^*)(z)$ whenever $x^*_s\stackrel{w^*}{\to} x^*$. This is a consequence of the fact that the operator $X^*\to L(X,Y^*)$ given by $x^*\mapsto x^*\otimes y^*$ is $w^*$-to-$w^*$-continuous as being the adjoint of the operator $X\pten Y\to X$ given by $x\otimes y\mapsto y^*(y)x$.
\end{proof}

At this point we will save notation calling $x:=x^{**}\in X$ and $y:=y^{**}\in Y$. Now we have that $K(z)=K(x\otimes y)$ holds for every $K\in K(X,Y^*)$. Since $K(X,Y^*)$ is separating for $X\pten Y$, we deduce that $z=x\otimes y$. Moreover, observe that $x_s\rightarrow x$ in the weak topology of $X$. Since $x_s\in C$ for every $s$ we conclude that $x\in \overline{C}^w$. Analogously, $y\in D$, so $z=x\otimes y\in \overline{C}^w\otimes \overline{D}^w$, which finishes the proof.
\end{proof}

In spite of the fact that, under the approximation property, the tensor product of weakly closed sets is weakly closed, it is interesting to notice that if $C$ and $D$ are weakly compact, it does not follow that $C\otimes D$ is weakly compact in $X\pten Y$ (for instance, if we take $C=D=B_{\ell_2}$, then the sequence $(e_n\otimes e_n)_n$ is equivalent to the $\ell_1$-basis, c.f. e.g. \cite[Example~2.10]{ryan}).

\section{Main results}

The aim of this section is to present the proof of Theorems \ref{condineceG} and \ref{theo:condisufi}. We start with the proof of Theorem~\ref{condineceG}.

\begin{proof}[Proof of Theorem~\ref{condineceG}]
Since $z$ is a preserved extreme point, there is a neighbourhood basis $\{S_\alpha\}$ of $z$ for the weak-topology of $\cconv(C\otimes D)$ so that $S_\alpha$ is a slice for every $\alpha$. Now, since $S_\alpha$ is a slice of $\cconv(C\otimes D)$ we can find $x_\alpha\otimes y_\alpha\in S_\alpha\cap (C\otimes D)$ for every $\alpha$. Since $S_\alpha$ is a weak basis for the weak topology at $z$ we get that $z\in \overline{\{x_\alpha\otimes y_\alpha\}}^w$, and now Theorem~\ref{th:prodwclosedG} and the fact that $C$ and $D$ are weakly closed imply that $z=x\otimes y$ for certain $x\in C$ and $y\in D$. 

If $z\neq 0$ it is not difficult to prove that $x$ and $y$ are preserved extreme points of $C$ and $D$. Indeed, if $S(\cconv(C\otimes D),T_\alpha, \beta_\alpha)$ is a neighbourhood system of $x\otimes y$ for the weak topology in $\cconv(C\otimes D)$, then the family of slices $S_\alpha'$ defined as
$$S_\alpha':=\{x'\in C: T_\alpha(x')(y)>1-\beta_\alpha\}$$
is a neighbourhood system of $x$ for the weak topology of $X$.
\end{proof}

An immediate consequence of Theorem~\ref{condineceG} is the following corollary.

\begin{corollary}\label{condineceweakstrongG}
Let $X$ and $Y$ be Banach spaces such that $K(X,Y^*)$ is separating for $X\pten Y$ (in particular, if the pair $(X,Y^*)$ has the CAP). Let $C\subseteq X$ and $D\subseteq Y$ be convex bounded subsets. If $z$ is a $w$-strongly exposed point of $\cconv(C\otimes D)$ then $z=x\otimes y$ for some $x\in C$ and $y\in D$. Moreover, if $z\neq 0$ then $x$ and $y$ are $w$-strongly exposed points of $C$ and $D$ respectively.
\end{corollary}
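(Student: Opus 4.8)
The plan is to deduce this straight from Theorem~\ref{condineceG}. Since every $w$-strongly exposed point is a preserved extreme point, Theorem~\ref{condineceG} applies verbatim and yields $z=x\otimes y$ with $x\in C$ and $y\in D$ (applying it, if necessary, to $\overline{C}^w$ and $\overline{D}^w$, which leaves $\cconv(C\otimes D)$ unchanged thanks to Theorem~\ref{th:prodwclosedG}), together with the fact that $x$ and $y$ are \emph{preserved} extreme points of $C$ and $D$ when $z\neq 0$. So the only real task is to upgrade ``preserved extreme'' to ``$w$-strongly exposed'' for the two factors, and for this I will exhibit the exposing functionals explicitly.

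Fix $T\in L(X,Y^*)=(X\pten Y)^*$ which $w$-strongly exposes $z=x\otimes y$, and assume $z\neq 0$, so that $x\neq 0$ and $y\neq 0$. I claim that $x^*:=T(\cdot)(y)\in X^*$ $w$-strongly exposes $x$ in $C$, and symmetrically that $y^*:=T(x)(\cdot)\in Y^*$ $w$-strongly exposes $y$ in $D$; by symmetry it suffices to treat $x^*$. First I would record the bookkeeping identity $\sup_{x'\in C}x^*(x')=T(z)=x^*(x)$: the inequality ``$\leq$'' holds because $x'\otimes y\in C\otimes D$ and $\sup T(C\otimes D)=\sup T(\cconv(C\otimes D))=T(z)$ by Lemma~\ref{lemma:slicesconv}(1) together with the fact that $T$ exposes $z$, while ``$\geq$'' is trivial since $x\in C$. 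In particular $x^*$ attains its supremum over $C$ at $x$. To see that it is attained \emph{only} at $x$, suppose $x'\in C$ with $x^*(x')=x^*(x)$; then $T(x'\otimes y)=T(z)$ with $x'\otimes y\in\cconv(C\otimes D)$, so because $T$ exposes $z$ we must have $x'\otimes y=z=x\otimes y$, and since $y\neq 0$ this forces $x'=x$ (apply functionals $f\otimes g$ with $f(x'-x)\neq 0$ and $g(y)\neq 0$). Hence $x^*$ exposes $x$ in $C$.

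It remains to check the ``strong'' requirement. Let $(x_n)_n\subset C$ with $x^*(x_n)\to x^*(x)$. Then $x_n\otimes y\in\cconv(C\otimes D)$ and $T(x_n\otimes y)=x^*(x_n)\to x^*(x)=T(z)=\sup T(\cconv(C\otimes D))$; since $T$ $w$-strongly exposes $z$, this gives $x_n\otimes y\to x\otimes y$ weakly in $X\pten Y$. Finally I recover weak convergence of the factors: given $f\in X^*$, pick $g\in Y^*$ with $g(y)\neq 0$ (possible since $y\neq 0$); then $(f\otimes g)(x_n\otimes y)=f(x_n)g(y)\to f(x)g(y)$, whence $f(x_n)\to f(x)$. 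Thus $x_n\to x$ weakly, so $x\in\wstrexp{C}$, and symmetrically $y\in\wstrexp{D}$.

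The arguments are short, and the only mildly delicate points are (a) the identification $\sup_{x'\in C}T(x')(y)=T(z)$, which rests on Lemma~\ref{lemma:slicesconv}(1) and on $T$ attaining its maximum over $\cconv(C\otimes D)$ exactly at $z$, and (b) the passage from weak convergence of the basic tensors $x_n\otimes y$ back to weak convergence of $x_n$ in $X$; both go through precisely because $z\neq 0$ forces $y\neq 0$, which is exactly the hypothesis under which the ``moreover'' part is asserted. If $z=0$ there is nothing to prove beyond what Theorem~\ref{condineceG} already provides.
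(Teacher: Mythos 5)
Your proof is correct and follows the same route as the paper: apply Theorem~\ref{condineceG} (noting that $w$-strongly exposed points are preserved extreme points) to get $z=x\otimes y$, then take a $w$-strongly exposing functional $T\in L(X,Y^*)$ for $z$ and check that $T(\cdot)(y)$ $w$-strongly exposes $x$ in $C$ (and symmetrically for $y$). The paper dismisses this last verification as immediate, whereas you spell it out; your details are accurate.
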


\begin{proof}
Theorem~\ref{condineceG} provides points $x\in C$ and $y\in D$ such that $z=x\otimes y$. It remains to prove that $x$ and $y$ are $w$-strongly exposed points if $z\neq 0$. Let $T\in L(X,Y^*)$ $w$-strongly exposing $x\otimes y$ in $\cconv(C\otimes D)$, and define $f\in X^*$ by $f(v):=T(v)(y)$. It is immediate that $f$ $w$-strongly exposes $x$ in $C$. The argument for $y$ is analogous.  
\end{proof} 



Now we will analyse a possible converse for Corollary \ref{condineceweakstrongG}. The first result we find is the following.

\begin{proposition}\label{prop:condisufstrexp}
Let $X, Y$ be Banach spaces. Let $C\subseteq X$ and $D\subseteq Y$ be 
bounded and symmetric convex subsets. Let $x_0$ be a strongly exposed point of $C$ and $y_0$ be a $w$-strongly exposed point of $D$. Then $x_0\otimes y_0$ is a $w$-strongly exposed point of $\cconv(C\otimes D)$.
\end{proposition}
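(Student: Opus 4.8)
The plan is to exhibit an operator $T\in L(X,Y^*)=(X\pten Y)^*$ that $w$-strongly exposes $x_0\otimes y_0$ in $\cconv(C\otimes D)$, built out of the functionals that do the job on $C$ and $D$ separately. Let $f\in X^*$ strongly expose $x_0$ in $C$ and $g\in Y^*$ $w$-strongly expose $y_0$ in $D$; normalizing, we may assume $f(x_0)=\sup f(C)=1$ and $g(y_0)=\sup g(D)=1$, and we set $T:=f\otimes g$, i.e.\ $T(x)=f(x)g$. By Lemma~\ref{lemma:slicesconv}(1), $\sup T(\cconv(C\otimes D))=\sup_{x\in C,\,y\in D} f(x)g(y)=1=T(x_0\otimes y_0)$, so $T$ attains its supremum at $x_0\otimes y_0$; it remains to check the $w$-strong-exposedness, i.e.\ that every slice $S(\cconv(C\otimes D),T,\delta)$ contains a weak neighbourhood of $x_0\otimes y_0$ in $\cconv(C\otimes D)$, or equivalently that sequences $(z_n)$ in $\cconv(C\otimes D)$ with $T(z_n)\to 1$ converge weakly to $x_0\otimes y_0$.

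The key step is to pass from a point $z$ in the slice $S(\cconv(C\otimes D),T,\varepsilon^2)$ to information about its "components". By Lemma~\ref{lemma:slicesconv}(2), $z=w+4R\varepsilon b$ with $w\in\co(S(C\otimes D,T,\varepsilon))$ and $\|b\|\le 1$, where $R$ bounds the norms on $C\otimes D$. So it suffices to control $\co(S(C\otimes D,T,\varepsilon))$ as $\varepsilon\to 0$. A basic tensor $x\otimes y\in C\otimes D$ lies in $S(C\otimes D,T,\varepsilon)$ precisely when $f(x)g(y)>1-\varepsilon$; since $|f(x)|\le 1$ and $|g(y)|\le 1$ (using symmetry of $C$ and $D$), this forces both $f(x)>1-\varepsilon$ and $g(y)>1-\varepsilon$, hence $x\in S(C,f,\varepsilon)$ and $y\in S(D,g,\varepsilon)$. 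Now I would use that $f$ \emph{strongly} exposes $x_0$: there is $\eta(\varepsilon)\to 0$ with $\diam S(C,f,\varepsilon)\le\eta(\varepsilon)$, so $\|x-x_0\|\le\eta(\varepsilon)$ for every such $x$. Writing $x\otimes y=(x-x_0)\otimes y+x_0\otimes y$ and noting $\|(x-x_0)\otimes y\|_\pi\le\eta(\varepsilon)R'$ (with $R'$ bounding $\|D\|$), we obtain $x\otimes y\in x_0\otimes S(D,g,\varepsilon)+\eta(\varepsilon)R'B_{X\pten Y}$. Taking convex hulls, $\co(S(C\otimes D,T,\varepsilon))\subseteq x_0\otimes \co(S(D,g,\varepsilon))+\eta(\varepsilon)R'B_{X\pten Y}$, and therefore
\[
S(\cconv(C\otimes D),T,\varepsilon^2)\subseteq x_0\otimes\cconv(S(D,g,\varepsilon))+\big(\eta(\varepsilon)R'+4R\varepsilon\big)B_{X\pten Y}.
\]

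Finally I would feed this into the weak-sequential formulation. Given $z_n\in\cconv(C\otimes D)$ with $T(z_n)\to 1$, pick $\varepsilon_n\to 0$ with $z_n\in S(\cconv(C\otimes D),T,\varepsilon_n^2)$; by the inclusion above, $z_n=x_0\otimes d_n+r_n$ with $d_n\in\cconv(S(D,g,\varepsilon_n))$ and $\|r_n\|_\pi\to 0$. Since $g$ $w$-strongly exposes $y_0$, the slices $S(D,g,\varepsilon_n)$ form a weak neighbourhood basis of $y_0$ in $D$, and one checks (again via Lemma~\ref{lemma:slicesconv}(2), now applied in $Y$ to control $\cconv(S(D,g,\varepsilon))$ modulo a small norm ball, together with weak closedness considerations) that $d_n\xrightarrow{w}y_0$ in $Y$; the map $y\mapsto x_0\otimes y$ is bounded linear hence weak-to-weak continuous, so $x_0\otimes d_n\xrightarrow{w}x_0\otimes y_0$ in $X\pten Y$, and since $\|r_n\|_\pi\to 0$ we conclude $z_n\xrightarrow{w}x_0\otimes y_0$. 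This shows $T$ $w$-strongly exposes $x_0\otimes y_0$. The main obstacle is the bookkeeping in the two applications of Lemma~\ref{lemma:slicesconv} — making sure the error terms $\eta(\varepsilon)R'+4R\varepsilon$ genuinely go to $0$ and that the "component" $d_n$ one extracts really lands in (a convex hull of) the slice $S(D,g,\varepsilon_n)$ rather than merely being close to it; the asymmetry between the \emph{strong} exposedness hypothesis on $x_0$ (which gives norm control, essential for absorbing the $(x-x_0)\otimes y$ term into a $\pi$-norm-small ball) and the merely \emph{weak} exposedness on $y_0$ is exactly what makes the argument go through.
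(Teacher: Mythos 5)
Your proposal is correct and uses the same key ingredients as the paper's proof --- the exposing functional $f\otimes g$, Lemma~\ref{lemma:slicesconv} to reduce a slice of $\cconv(C\otimes D)$ to $\co(S(C\otimes D,f\otimes g,\varepsilon))$ plus a small ball, the splitting of the condition $f(x)g(y)>1-\varepsilon$ into conditions on each factor, and the use of \emph{strong} exposedness of $x_0$ to get norm control on the first factor --- but the endgame is organized differently. The paper fixes an arbitrary basic weakly open set $U=\bigcap_i S(\cconv(C\otimes D),T_i,\alpha_i)$ and verifies directly, via the estimate $T_i(x)(y)\geq T_i(x_0)(y)-\Vert T_i\Vert\,\Vert x-x_0\Vert\,\Vert y\Vert$ together with the $w$-exposedness of $y_0$ applied to the weakly open set $\bigcap_i\{y\in D: T_i(x_0)(y)>\cdots\}$, that a small $f\otimes g$-slice lands inside $U$. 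You instead establish the structural inclusion $S(\cconv(C\otimes D),f\otimes g,\varepsilon^2)\subseteq x_0\otimes S(D,g,\varepsilon)+o(1)B_{X\pten Y}$ (note that $\co(S(D,g,\varepsilon))=S(D,g,\varepsilon)$, since a slice of a convex set is convex, so no closure and no second application of Lemma~\ref{lemma:slicesconv} are needed for the ``component'' $d_n$) and then transport weak convergence through the weak-to-weak continuous map $y\mapsto x_0\otimes y$. This isolates cleanly where each hypothesis enters and is, if anything, a slightly more modular route to the same estimates.

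Two small points should be fixed. First, you assert that $f(x)g(y)>1-\varepsilon$ with $|f(x)|,|g(y)|\leq 1$ forces $f(x)>1-\varepsilon$ and $g(y)>1-\varepsilon$; as stated this fails when both values are negative (e.g.\ $f(x)=g(y)=-1$). Since $C$ and $D$ are symmetric, the repair is to replace the representation $x\otimes y$ by $(-x)\otimes(-y)$ whenever $f(x)<0$, after which the implication holds; the paper's own proof glosses over the same point. Second, you describe $w$-strong exposedness as ``every slice contains a weak neighbourhood of $x_0\otimes y_0$''; slices are themselves weakly open, so that condition is vacuous --- what is needed is that every weak neighbourhood contains a slice. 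This is only a slip of the pen, since the sequential reformulation you state immediately afterwards, and actually verify, is the correct one.
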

\begin{proof} 

By homogeneity, we may assume that $C\subseteq B_X$ and $D\subseteq B_Y$, so $R:=\sup_{z\in C\otimes D}\Vert z\Vert\leq 1$.
Assume that $x^*$  strongly exposes $x_0$ in $C$ and $y^*$ $w$-strongly exposes $y_0$ in $D$. We may also assume that $\sup x^*(C)=1=\sup y^*(D)$. Let us prove that $x^*\otimes y^*$ $w$-strongly exposes $x_0\otimes y_0$ in $\cconv(C\otimes D)$. To this end, pick $U:=\bigcap\limits_{i=1}^n S(\cconv(C\otimes D), T_i,\alpha_i)$ to be a relatively weakly open subset of $\cconv(C\otimes D)$ containing $x_0\otimes y_0$, with $\norm{T_i}=1$ for each $i$, and let us prove that $S(\cconv(C\otimes D), x^*\otimes y^*,\beta)\subseteq U$ for a suitable $\beta$. 
Notice that 
\[(x^*\otimes y^*)(x_0\otimes y_0)=x^*(x_0)y^*(y_0)=1=\sup_{z\in \cconv(C\otimes D)} (x^*\otimes y^*)(z)\]
(here we use Lemma~\ref{lemma:slicesconv} together with the fact that $C$ and $D$ are symmetric). 

Since $x_0\otimes y_0\in U$, we have $T_i(x_0)(y_0)>\sup_{z\in \cconv(C\otimes D)}T_i(z)-\alpha_i$ for every $1\leq i\leq n$. Thus we can find $\varepsilon_0>0$ so that $T_i(x_0)(y_0)>\sup_{z\in \cconv(C\otimes D)}T_i(z)-\alpha_i+\varepsilon_0$ for every $i$.

Since $x^*$ strongly exposes $x_0$, there is $\delta'>0$ such that $\diam(S(C,x^*,\delta'))<\frac{\varepsilon_0}{4}$. 
Moreover, notice that
$$y_0\in \bigcap\limits_{i=1}^n \left\{y\in D: T_i(x_0)(y)>\sup_{z\in \cconv(C\otimes D)}T_i(z)-\alpha_i+\varepsilon_0\right\},$$
which is a 
relatively weakly open subset of $D$ containing $y_0$. Since $y_0$ is weakly exposed by $y^*$ we can find $\delta''>0$ such that
$$S(D,y^*,\delta'')\subseteq \bigcap\limits_{i=1}^n \left\{y\in D: T_i(x_0)(y)>\sup_{z\in \cconv(C\otimes D)}T_i(z)-\alpha_i+\varepsilon_0\right\}.$$
Take $\delta:=\min\{\delta',\delta'', \varepsilon_0/4\}$. 
Consider finally the slice $S(\cconv(C\otimes D),x^*\otimes y^*,\eta^2)$, where $0<\eta<\delta/4$. Let us prove that the previous slice is contained in $U$. To this end, notice that
\begin{align*}S(\cconv(C\otimes D),x^*\otimes y^*,\eta^2)&\subseteq \co(S(C\otimes D,x^*\otimes y^*,\eta))+4\eta B_{X\pten Y}\\
&\subseteq\co(S(C\otimes D,x^*\otimes y^*,\delta))+\delta B_{X\pten Y}=:A
\end{align*}
thanks to Lemma~\ref{lemma:slicesconv} and the choice of $\eta$. 
So, it suffices to prove that $A\subseteq U$. To this end, pick $x\otimes y\in S(C\otimes D,x^*\otimes y^*,\delta)$. This means that $x^*(x)y^*(y)>1-\delta$, from where $x^*(x)>1-\delta\geq 1-\delta'$ and $y^*(y)>1-\delta\geq 1-\delta''$. By the definition of $\delta'$ and $\delta''$ we get that $\Vert x-x_0\Vert<\frac{\varepsilon_0}{4}$ and $T_i(x_0)(y)>\sup_{z\in \cconv(C\otimes D)}T_i(z)-\alpha_i+\varepsilon_0$ for every $i$. Hence
\[\begin{split}T_i(x)(y)\geq T_i(x_0)(y)-\Vert T_i\Vert\Vert x-x_0\Vert \Vert y\Vert& >\sup_{z\in \cconv(C\otimes D)}T_i(z)-\alpha_i+\varepsilon_0-\frac{\varepsilon_0}{4}\\& =\sup_{z\in \cconv(C\otimes D)}T_i(z)-\alpha_i+\frac{3\varepsilon_0}{4}.
\end{split}\]
An easy convexity argument implies that \[ T_i(u)>\sup_{z\in \cconv(C\otimes D)}T_i(z)-\alpha_i+\frac{3\varepsilon_0}{4} \quad \forall u\in \co(S(C\otimes D,x^*\otimes y^*,\delta)).\] Now, given $u\in A$, we have $u=v+w$ with $v\in \co(S(C\otimes D,x^*\otimes y^*,\delta))$ and $\Vert w\Vert\leq \delta\leq\varepsilon_0/4$. Then, 
\[\begin{split}
T_i(u)=T_i(v)+T_i(w) & \geq \sup_{z\in \cconv(C\otimes D)}T_i(z)-\alpha_i+\frac{3\varepsilon_0}{4}-\Vert w\Vert\\
& \geq \sup_{z\in \cconv(C\otimes D)}T_i(z)-\alpha_i+\frac{\varepsilon_0}{2}>\sup_{z\in \cconv(C\otimes D)}T_i(z)-\alpha_i
\end{split}\]
for each $i$. We conclude that $u\in U$, which proves that $A\subseteq U$ and the proof is finished.
\end{proof}

Note that in Proposition~3.3 we obtain a compact operator ($T\colon X\to Y^*$ given by $T(x)=x^*(x)y^*$) providing a neighbourhood basis for $x_0\otimes y_0$ for the weak topology in $\cconv(C\otimes D)$. This motivates to consider the following notion. 

\begin{definition}\label{defi:compactneighsys}
Let $X$ and $Y$ be Banach spaces, and let $C\subseteq X$,  $D\subseteq Y$ be two subsets. We say that $x\otimes y\in C\otimes D$ \emph{has a compact neighbourhood system for the weak topology in $\cconv(C\otimes D)$} if, given any weakly open subset $U$ 
containing $x_0\otimes y_0$, there are slices $S(\cconv(C\otimes D), T_i,\alpha_i)$ given by compact operators $T_i\in K(X, Y^*)$ such that 
\[ x_0\otimes y_0 \in \bigcap\limits_{i=1}^n S(\cconv(C\otimes D), T_i,\alpha_i)\subseteq U.\] 
\end{definition}

\begin{remark}\noindent
\begin{enumerate}[a)]
    \item The above definition has an easy interpretation in terms of nets: $x\otimes y$ has a compact neighbourhood system for the weak topology in $\cconv(C\otimes D)$ if, and only if, given a net $(z_\alpha)_\alpha \subset \cconv(C\otimes D)$, the condition $T(z_\alpha)\rightarrow T(x\otimes y)$ for every $T\in K(X,Y^*)$ implies $z_\alpha\rightarrow x\otimes y$ in the weak topology on $X\pten Y$. Equivalently, $x\otimes y$ is a  point of continuity of the formal identity \[I\colon (\cconv(C\otimes D),w)\longrightarrow (\cconv(C\otimes D),\sigma(X\pten Y, K(X,Y^*))).\]
    \item In the case that $K(X,Y^*)$ is separating for $X\pten Y$ (in particular, if the pair $(X,Y^*)$ has the CAP) and $C\otimes D$ is weakly compact, $\cconv(C\otimes D)$ is also weakly compact by  Krein-Smulyan theorem (see e.g. \cite[Theorem~II.~2.11]{du}) and $\sigma(X\pten Y, K(X,Y^*))$ is Hausdorff because $K(X,Y^*)$ is separating for $L(X, Y^*)$. Thus, the identity map $I$ above is a homeomorphism and so every $x\otimes y\in C\otimes D$ has a compact neighbourhood system.
\end{enumerate}
\end{remark}
 
Now we are ready to present the proof of Theorem~\ref{theo:condisufi}.

\begin{proof}[Proof of Theorem~\ref{theo:condisufi}]
(i)$\Rightarrow$ (ii) follows from Corollary \ref{condineceweakstrongG}. 

(ii)$\Rightarrow$(i). Write $R:=\sup_{z\in C\otimes D}\Vert z\Vert$. Take $x_0^*\in X^*$ and $y_0^*\in Y^*$ $w$-strongly exposing $x_0$ and $y_0$ in $C$ and $D$, respectively, with  $x_0^*(x_0)=\sup x^*_0(C)=1$, and $y_0^*(y_0)=\sup y^*_0(D)=1$. Pick $U$ to be a weak neighbourhood of $x_0\otimes y_0$ in $\cconv(C\otimes D)$. By the assumption, we can assume that $U=\bigcap\limits_{i=1}^n S(\cconv(C\otimes D), T_i,\alpha_i)$ for certain compact operators $T_1,\ldots, T_n\colon X\to Y^*$. Furthermore, we can assume $\sup_{\cconv(C\otimes D)}T_i=1$ for every $i$. Let $\eta$ small enough so that $T_i(x_0\otimes y_0)>1-\alpha_i+\eta$ holds for every $1\leq i\leq n$. Moreover, observe that $x_0\in \bigcap\limits_{i=1}^n \{z\in C: T_i(z)(y_0)>1-\alpha_i+\eta\}$, which is a relatively weakly open subset of $C$. Since $x_0^*$ $w$-strongly exposes $x_0$ then there exists $\delta'>0$ 
so that \[x_0\in S(C, x_0^*,\delta')\subseteq \bigcap\limits_{i=1}^n \{z\in C: T(z)(y_0)>1-\alpha_i+\eta\}.\]

Now, for every $1\leq i\leq n$, the set $T_i(S(C, x_0^*,\delta'))$ is a relatively compact subset of $Y^*$. Using the compactness condition on all the $T_i's$ we can find a finite set $x_1,\ldots, x_m\in S(C, x_0^*,\delta')$ so that $B(T_i(x_j),\frac{\eta}{8}), 1\leq j\leq m$, is a covering of $T_i(S(C, x_0^*,\delta'))$ for every $1\leq i\leq n$. Observe that $T_i(x_j)(y_0)>1-\alpha_i+\eta$ holds for every $1\leq i\leq n$ and $1\leq j\leq m$. Consequently, 
$$y_0\in\bigcap\limits_{i=1}^n\bigcap\limits_{j=1}^m\{y\in D: T_i(x_j)(y)>1-\alpha_i+\eta\}.$$
Since $y_0^*$ $w$-strongly exposes $y_0$ we can find $\delta''>0$ so that \[y_0\in S(B_Y,y_0^*,\delta'')\subseteq  \bigcap\limits_{i=1}^n\bigcap\limits_{j=1}^m\{y\in D: T_i(x_j)(y)>1-\alpha_i+\eta\}.\]

We claim now that 
\[ S(C, x^*_0, \delta')\otimes S(D, y^*_0, \delta'')\subset \bigcap_{i=1}^n S\left(\cconv(C\otimes D), T_i, \alpha_i-\frac{\eta}{2}\right).\]
Indeed, let $x\in S(C, x^*_0,\delta')$ and $y\in S(D, y^*_0, \delta'')$. 
We have, for every $i\in\{1,\ldots, n\}$, an index $j_i\in \{1,\ldots, m\}$ such that $\Vert T_i(x)-T_i(x_{j_i})\Vert<\frac{\eta}{2}$. On the other hand, since $S(D,y_0^*,\delta'')\subseteq  \bigcap\limits_{i=1}^n\bigcap\limits_{j=1}^m\{y\in D: T_i(x_j)(y)>1-\alpha_i+\eta\}$ we have that, for every $1\leq i\leq n$, $T_i(x_{j_i})(y)>1-\alpha_i+\eta$. Consequently
\[
T_i(x)(y)\geq T_i(x_{j_i})(y)-\Vert T_i(x_{j_i})-T_i(x)\Vert >1-\alpha_i+\eta-\frac{\eta}{2}
 =1-\alpha_i+\frac{\eta}{2}.
\]

Take $\delta:=\min\{\delta',\delta'', \frac{\eta}{8},\frac{\eta}{8R}\}$ and consider $S:=S(\cconv(C\otimes D),x_0^*\otimes y_0^*, \delta^2)$. Observe that $x_0\otimes y_0\in S$. Moreover, 
$$S\subseteq \co(S(C\otimes D,x_0^*\otimes y_0^*,\delta))+4R\delta B_{X\pten Y}$$
in virtue of Lemma~\ref{lemma:slicesconv}. Now, given $1\leq i\leq n$, since $1-\delta>\max\{1-\delta',1-\delta''\}$ we conclude that every element  $x\otimes y$ of $S(C\otimes D,x_0^*\otimes y_0^*,\delta)$ satisfies  $x_0^*(x)>1-\delta'$ and $y_0^*(y)>1-\delta''$, so $T_i(x)(y)>1-\alpha_i+\frac{\eta}{2}$. Since $T_i$ is a linear continuous functional on $X\pten Y$ we conclude that $T_i(z)\geq 1-\alpha_i+\frac{\eta}{2}$ holds for every $1\leq i\leq n$ and every $z\in \co(S(C\otimes D,x_0^*\otimes y_0^*,\delta))$. Henceforth, given $z\in S$ we can find $u\in \co(S(C\otimes D,x_0^*\otimes y_0^*,\delta))$ and $v\in B_{X\pten Y}$ so that $z=u+4R\delta v$. Now, given $1\leq i\leq q$ we get
$$T_i(z)=T_i(u)+4\delta R T_i(v)\geq 1-\alpha_i+\frac{\eta}{2}-4R\delta>1-\alpha_i,$$
from where we conclude that $z\in \bigcap\limits_{i=1}^q S(\cconv(C\otimes D), T_i,\alpha_i)=U$. This proves that $S\subseteq U$.

Summarising, we have proved that every relatively weakly open subset of $\cconv(C\otimes D)$ containing $x_0\otimes y_0$ actually contains a slice $S(\cconv(C\otimes D), x_0^*\otimes y_0^*,\alpha)$. Moreover, \[(x_0^*\otimes y_0^*)(x_0\otimes y_0)=\sup x_0^*(C)\sup y_0^*(D)=\sup (x_0^*\otimes y_0^*)(\cconv(C\otimes D)).\] 
Thus, $x_0^*\otimes y_0^*$ $w$-strongly exposes $x_0\otimes y_0$. 
\end{proof}


\begin{remark} The hypothesis of symmetry of $C$ and $D$ in  Theorem \ref{theo:condisufi} and Proposition \ref{prop:condisufstrexp} is needed. Indeed, let $C\subseteq X$ be any bounded subset with more than one point such that $0$ is a strongly exposed point of $C$, and let $D\subseteq Y$ be a bounded set with a strongly exposed point $y\in D$ satisfying that $-y\in D$ too. In spite of $0$ and $y$ being strongly exposed, the basic tensor $0\otimes y=0\in \overline{\co}(C\otimes D)$ is not an extreme point, since 
$$0\otimes y=0=\frac{1}{2}(x\otimes y+x\otimes (-y)),$$
for any $x\in C\setminus\{0\}$. Similarly, the result in \cite[Theorem~1]{werner} about the denting points of $\cconv(C\otimes D)$ does not hold when $C$ or $D$ are non-symmetric. 
\end{remark} 


At this point one can wonder whether the assumption of the existence of the compact neighbourhood system can be removed in Theorem~\ref{theo:condisufi}. We will show that the answer is negative. Let us consider first the following example, where the set is not symmetric. 


\begin{example}\label{exam:conjuntista}
Consider $X=Y=\ell_2$, let $K:=\cconv\{e_n:n\in\mathbb N\}$ and $f:=\sum_{k=1}^\infty 2^{-k}e_k^*$. It is known that $0$ is $w$-strongly exposed by $f$ in $K$. Indeed, assume $(x_n)_{n=1}^\infty\subset K$ and $\lim_{n\to\infty}\<f,x_n\>=0$. Since $K\subset \ell_2$, each $x_n$ can be expressed as $x_n=\sum_{k=1}^\infty a_k ^n e_k$ with $a_k\geq 0$ and $\sum_{k=1}^\infty a_k^n \leq 1$. Therefore
\[ 0 = \lim_{n\to\infty} \<f, x_n\> = \lim_{n\to\infty} \sum_{k=1}^\infty 2^{-k} a_k^n \geq \lim_{n\to\infty} 2^{-k} a_k^n. \] 
This means that $\lim_{n\to\infty}\<e_k, x_n\> = 0$ for each $k\in \mathbb N$ and so $x_n\stackrel{w}{\to}0$. 
. However, $f\otimes f$ does not weak-strongly exposes $0$ in $K\otimes K\subseteq \ell_2\pten \ell_2$. Even more, $0$ is not weakly strongly exposed in $K\otimes K$, i.e. there is no  bilinear form $B\colon \ell_2\times \ell_2\to \mathbb R$ such that $z_k\rightarrow 0$ weakly whenever $z_k\in K\otimes K$ satisfies that $B(z_k)\rightarrow 0$. 

In order to prove that, take a bilinear form $B$. For every $n\in\mathbb N$, the sequence $\{B(e_n,e_k)\}_{k\in\mathbb N}\rightarrow 0$ since $e_k$ is weakly null in $\ell_2$. Thus there is a subsequence $(e_{k_n})_n$ of $(e_n)_n$ satisfying that $B(e_n, e_{k_n})\rightarrow 0$. Observe that $e_n\otimes e_{k_n}\in K\otimes K$ for every $n\in\mathbb N$.

However, 
$(e_n\otimes e_{k_n})_n$ does not converge weakly to $0$ since it is isometrically equivalent to the $\ell_1$ basis; this follows by the same argument as for the diagonal $e_n\otimes e_n$, see e.g. \cite[Example~2.10]{ryan}. 


\end{example}

\begin{remark}
The above argument also proves that $0$ is not weakly strongly exposed in $\cconv(K\otimes K)$. It also follows that $f\otimes f$ exposes $0$ in $\cconv(K\otimes K)$. Indeed, since $f$ exposes $0$ in $K$ we have that, given any $z\in K$, $f(z)=0$ if and only if $z=0$. Consequently, given $x\otimes y\in K\otimes K$ we have that $(f\otimes f)(x\otimes y)=f(x)f(y)=0$ implies that  either $x$ or $y$ equals $0$ and so $x\otimes y=0$. From this, and the fact that $\{e_n\otimes e_m\}_{n,m}$ is a Schauder basis for $\ell_2\pten \ell_2$, it follows that $(f\otimes f)(z)=0$ if and only if $z=0$ for every $z\in \cconv(K\otimes K)$.
\end{remark}

In order to find an example showing that Theorem~\ref{theo:condisufi} does not hold without the  assumption of the existence of a compact neighbourhood system, we will use the set $K$ from Example~\ref{exam:conjuntista} to construct an equivalent renorming $|\cdot|$ on $\ell_2$ satisfying that the new unit ball $B_{(\ell_2, |\cdot|)}$ has a $w$-strongly exposed point $x_0$ such that $x_0\otimes x_0$ is not $w$-strongly exposed in $B_{(\ell_2,|\cdot|)\pten (\ell_2,|\cdot|)}$.


\begin{example}\label{exam:l2renorming}
Set an equivalent norm $|\cdot|$ on $\ell_2$ so that the new unit ball is $\cconv((K-e_1)\cup(-K+e_1)\cup \frac{1}{8} B_{\ell_2})$, where $K$ is the set described in Example~\ref{exam:conjuntista}. We claim that $-e_1\in B_{(\ell_2,|\cdot|)}$ is $w$-strongly exposed by $f:=\sum_{k=1}^\infty 2^{-k}e_k^*$. Observe that $f(-e_1)=-\frac{1}{2}$. Call $A:=K-e_1$ for simplicity. Since $f$ is linear, it is clear that 
$$\sup_{z\in B_{(\ell_2,|\cdot|)}} \vert f(z)\vert=\sup\limits_{z\in A\cup -A\cup \frac{1}{8} B_{\ell_2}} \vert f(z)\vert.$$
Observe that the above supremum equals $\sup_{z\in A}\vert f(z)\vert$ since $\vert f(z)\vert\leq \frac{1}{8}$ on $\frac{1}{8}B_{\ell_2}$ and by a symmetry argument.

On the other hand, given $z\in A$ we have $z=v-e_1$ for $v\in K$. Now $f(v-e_1)=-\frac{1}{2}+f(v)\leq -\frac{1}{2}$ since $f\geq 0$ on $K$. This proves that $\Vert f\Vert=1/2=\vert f(-e_1)\vert$. In order to see that $f$ $w$-strongly exposes $-e_1$ it remains to prove that if $f(z_n)\rightarrow -\frac{1}{2}$ with $(z_n)_n\subset B_{(\ell_2,|\cdot|)}$ then $z_n\rightarrow -e_1$ weakly. In order to do so, by a density argument, we can assume with no loss of generality that $z_n\in \co(A\cup -A\cup \frac{1}{8} B_{\ell_2})$. For every $n$, we can write
$$z_n=\alpha_n a_n+\beta_n(-a_n')+\gamma_n x_n$$
for $a_n,a_n'\in A, x_n\in \frac{1}{8} B_{\ell_2}$ and $\alpha_n,\beta_n,\gamma_n\in [0,1]$ with $\alpha_n+\beta_n+\gamma_n=1$ for every $n$.

Observe that 
$$f(z_n)=\alpha_n f(a_n)+\beta_n f(-a_n')+\gamma_n f(x_n)\geq \alpha_n f(a_n)+\gamma_n (-1/8)$$
since $f(-a_n)\geq 0$ for every $n\in\mathbb N$ and since $f(x_n)\leq 1/8$. Since $f(z_n)\rightarrow -1/2$ the unique possibility is that $\alpha_n\rightarrow 1$ (which implies $\beta_n\rightarrow 0$ and $\gamma_n\rightarrow 0$). Moreover, it is immediate that $f(a_n)\rightarrow -1/2$. Since $f$ $w$-strongly exposes $-e_1$ in $A$ we conclude that $a_n\rightarrow -e_1$ weakly, so $z_n\rightarrow -e_1$ weakly, as desired.

Finally, if we consider $e_1\otimes e_1$, we get that it is not $w$-strongly exposed in $B_{(\ell_2,|\cdot|)\pten( \ell_2,|\cdot|)}$. As in Example~\ref{exam:conjuntista}, given any bilinear and continuous form $B$, we can find a  strictly increasing sequence $(k_n)_n$ such that $B(e_n,e_{k_n})\rightarrow 0$, so $B(-e_1+e_n,-e_1+e_{k_n})\rightarrow B(e_1,e_1)$. However, if $k_1<k_2<\ldots$ we have that $\{e_n\otimes e_{k_n}\}$ is equivalent to the $\ell_1$ basis since $(\ell_2,|\cdot|)$ and $\ell_2$ are isomorphic (it follows for instance from \cite[Proposition~2.3]{ryan}) and therefore $((-e_1+e_n)\otimes(-e_1+e_{k_n}))_n$ is not weakly convergent to $e_1 \otimes e_1$.
\end{example}

We end the paper with some open questions.

\begin{question} Is $x\otimes y$ a preserved extreme point of $B_{X\pten Y}$ whenever $x$ and $y$ are preserved extreme points of $B_X$ and $B_Y$?
\end{question}

\begin{question} Is every (preserved) extreme point of $B_{X\pten Y}$ a basic tensor? 
\end{question}

\end{document}